\newcommand{\bfx}{{\bf x}}
\newcommand{\bfy}{{\bf y}}
\newcommand{\bfa}{{\bf a}}
\newcommand{\bfc}{{\bf c}}
\newcommand{\bfv}{{\bf v}}
\newcommand{\bfone}{{\bf 1}}
\setlist[enumerate]{leftmargin=.5in}
\setlist[itemize]{leftmargin=.5in}
\theoremstyle{plain}
\newtheorem{theorem}{Theorem}[section]
\newtheorem{remark}[theorem]{Remark}
\newtheorem{lemma}[theorem]{Lemma}
\newtheorem{proposition}[theorem]{Proposition}
\newtheorem{definition}[theorem]{Definition}
\title{On the neighborliness of dual flow polytopes of quivers.}
\author{Patricio Gallardo}
\address{Department of Mathematics, Washington University in St. Louis}
 \email{pgallardocandela@wustl.edu.}
\author{Daniel Mckenzie} 
\address{Department of Mathematics, University of Georgia}
 \email{danmac29@uga.edu.}
\begin{document}

\begin{abstract}
In this note we investigate under which conditions the dual of the flow polytope (henceforth referred to as the `dual flow polytope') of a quiver is k-neighborly, for generic weights near the canonical weight. We provide a lower bound on k, depending only on the edge connectivity of the underlying graph, for such weights. In the case where the canonical weight is in fact generic, we explicitly determine a vertex presentation of the dual flow polytope. Finally, we specialize our results to the case of complete, bipartite quivers where we show that the canonical weight is indeed generic, and are able to provide an improved bound on k. Hence, we are able to produce many new examples of high-dimensional, $k$-neighborly polytopes.
\end{abstract}

\maketitle

\maketitle

\section{Introduction}
The study of combinatorial properties of polytopes by relating them to appropriate geometric objects is an important technique in contemporary discrete geometry. Within that context
the motivation of our work is two-fold. First, we recall that a $d$ dimensional polytope $P$ with $n$ vertices is $k$-neighborly if every set of $k$ vertices spans a face of $P$. One can show that if  $k > \lfloor d/2\rfloor$, then $P$ is combinatorially equivalent to the simplex, hence it is natural to consider $k$-neighborly polytopes for $k \leq \lfloor d/2\rfloor$. $k$-neighborly polytopes are of interest in extremal combinatorics, but explicit examples, particularly in high dimension, remain hard to construct. This work studies when the dual of the flow polytope of a weighted quiver $(Q,\theta)$ is $k$-neighborly, and as a result we provide what we believe are new examples of $k$-neighborly polytopes. \\ 

On the other hand, the flow polytope $P(Q,\theta)$ is associated to a moduli space $\mathcal{M}(Q, \theta)$ parametrizing representations of $Q$ satisfying a certain stability condition given by $\theta$. It seems natural to relate the combinatorial properties of $P(Q,\theta)$ with the geometric characteristics of the objects parametrized by $\mathcal M(Q, \theta)$.  This relationship between a polytope $P$, a quiver $Q$ with weight $\theta$, and a moduli space $\mathcal M(Q, \theta)$ is done via toric geometry, and it has been used before for studying reflexive polytopes (see \cite{Altmann2009flow}). However, to the best of the authors' knowledge, this is the first paper to study $k$-neighborliness. \\

In our first result, we bound the $k$-neighborliness of the dual flow polytope $P^{\Delta}(Q,\theta)$ for $\theta$ close to the canonical weight. The proof utilizes earlier work of Jow \cite{Jow2011}. This bound is generic--- it does not depend on the orientations of the arrows in $Q$---but is non-constructive:
\begin{theorem}\label{thm:main}
Let $Q$ be an acylic quiver such that its underlying graph 
 $\Gamma = (V,E)$ is $r$-edge connected. Let $\delta_{Q}$ denote the canonical weight for this quiver.  Then there exists an arbitrarily small perturbation $\theta$, of $\delta_{Q}$ such that $P^{\Delta}(Q,\theta)$ is a $|E| - |V| + 1$ dimensional polytope on $|E|$ vertices that is at least $\lfloor r/2 \rfloor$-neighborly.  
\end{theorem}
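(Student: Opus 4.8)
The plan is to reduce $k$-neighborliness of $P^{\Delta}(Q,\theta)$ to the solvability of a family of flow feasibility problems on edge-deleted subquivers, and then to use the $r$-edge-connectivity of $\Gamma$ to guarantee solvability for a suitable small perturbation of $\delta_Q$.

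First I would record the combinatorial description of $P^{\Delta}(Q,\theta)$ for generic $\theta$, following Jow's analysis of the face structure of flow polytopes \cite{Jow2011}. For generic $\theta$, $P(Q,\theta)$ is a simple polytope of dimension $|E|-|V|+1$ whose facets are precisely the sections $F_a=\{x\in P(Q,\theta):x_a=0\}$ for $a\in E$; dually $P^{\Delta}(Q,\theta)$ has vertex set $\{v_a\}_{a\in E}$, and for $S\subseteq E$ the set $\{v_a:a\in S\}$ spans a face of $P^{\Delta}(Q,\theta)$ precisely when $\bigcap_{a\in S}F_a\neq\emptyset$. Unwinding the defining equations, $\bigcap_{a\in S}F_a$ is the flow polytope of the subquiver $Q\setminus S$ (delete the arrows in $S$, keep all vertices) with the same weight $\theta$, so $\{v_a:a\in S\}$ spans a face iff $Q\setminus S$ admits a nonnegative flow with divergence $\theta$. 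Hence it suffices to produce an arbitrarily small, generic perturbation $\theta$ of $\delta_Q$ such that $Q\setminus S$ admits such a flow for every $S\subseteq E$ with $|S|\le\lfloor r/2\rfloor$; applying this to singletons $S$ simultaneously establishes that all $|E|$ of the $v_a$ are genuine vertices.

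Next I would invoke the classical feasibility criterion (Gale's theorem, equivalently LP duality or Hoffman's circulation theorem): a digraph $D$ on $V$ admits a nonnegative flow with divergence $\theta$ (where $\sum_v\theta_v=0$) if and only if $\theta(W):=\sum_{v\in W}\theta_v\le 0$ for every $W\subseteq V$ into which no arc of $D$ enters. The crux of the estimate is then the following. Fix $S$ with $|S|=k\le\lfloor r/2\rfloor$ and a set $W$ entered by no arc of $Q\setminus S$. Since $k<r$, the graph $\Gamma\setminus S$ is connected, so $W$ and $\bar W$ are nonempty; since $\Gamma$ is $r$-edge-connected there are at least $r$ edges of $\Gamma$ across the cut $(W,\bar W)$, at most $k$ of which lie in $S$, and each of the remaining $\ge r-k$ edges, surviving in $Q\setminus S$, must be oriented from $W$ to $\bar W$. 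Thus at least $r-k$ arcs of $Q$ point $W\to\bar W$ while at most $k$ point $\bar W\to W$, so, writing $\delta_Q(v)=\operatorname{indeg}_Q(v)-\operatorname{outdeg}_Q(v)$ and letting $e_Q(A,B)$ count arcs of $Q$ from $A$ to $B$,
\[
\delta_Q(W)=e_Q(\bar W,W)-e_Q(W,\bar W)\le k-(r-k)=2k-r\le 0 .
\]
In particular the canonical weight $\delta_Q$ itself already satisfies every (weak) feasibility inequality arising this way.

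Finally I would pass from $\delta_Q$ to a generic perturbation $\theta=\delta_Q+\varepsilon$ while preserving these inequalities. If $r$ is odd the displayed bound is strict ($\delta_Q(W)\le-1$), so any sufficiently small $\varepsilon$ keeps $\theta(W)<0$, and one may in addition take $\varepsilon$ generic (off the finitely many walls on which $P(Q,\theta)$ fails to be simple); this settles the odd case. If $r$ is even, the constraints that are not already strict come from the $W$ with $\delta_Q(W)=0$, which by the estimate are (sides of) minimum edge-cuts of $\Gamma$ whose $r$ crossing arcs split evenly, and one must choose $\varepsilon$ with $\varepsilon(W)\le 0$ on all of these simultaneously while keeping $\theta$ generic. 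The hard part will be this last step: verifying that the finitely many linear constraints coming from minimum cuts are mutually consistent and compatible with a generic small perturbation. Granting it, the reduction of the first paragraph yields that $\{v_a:a\in S\}$ spans a $(|S|-1)$-dimensional face of $P^{\Delta}(Q,\theta)$ for every $S$ with $|S|\le\lfloor r/2\rfloor$, which is exactly $\lfloor r/2\rfloor$-neighborliness.
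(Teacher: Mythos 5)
Your reduction (polarity of a simple polytope plus Gale's flow--feasibility criterion) and your cut-counting estimate are sound, but the proof is incomplete exactly where you flag it, and the proposed fix for even $r$ cannot work as stated. When $r$ is even and $|S|=k=r/2$, a tight set $W$ is, as your own computation shows, one side of a minimum cut whose $r$ edges are split $r/2$--$r/2$ by orientation, and $S$ must consist precisely of the $r/2$ arcs entering $W$. But then $W^{c}$ is also a tight set, realized by $S'=$ the $r/2$ arcs leaving $W$. So your constraints demand $\theta(W)\le 0$ and $\theta(W^{c})=-\theta(W)\le 0$ simultaneously, forcing $\theta(W)=0$: no perturbation can make all required inequalities strict, and you are pushed onto the wall $\theta(W)=0$. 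On that wall the framework of your first paragraph collapses: the face of $P(Q,\theta)$ on which all $r$ crossing coordinates vanish is then nonempty (a product of two flow polytopes) and in general has codimension $r-1$ while lying on $r$ facets, so $P(Q,\theta)$ is not simple, $P^{\Delta}$ is not simplicial, and ``$\bigcap_{a\in S}F_a\neq\emptyset$'' no longer implies ``$\{v_a:a\in S\}$ spans a face'' (compare two diagonal vertices of a square face of the cube). Quivers realizing this configuration exist (e.g.\ an acyclic orientation of $K_5$, $r=4$, with a middle vertex of in-degree and out-degree $2$), so the even case is a genuine gap, not a technicality. A smaller soft spot: nonemptiness of $F_a$ for singletons does not by itself make $v_a$ a vertex of $P^{\Delta}$; you need each $F_a$ to be a facet, i.e.\ tightness, which your singleton feasibility plus genericity does yield via the stable-subquiver results of Hille/Jo\'o, but this step must be said (and these polytope facts are not in Jow \cite{Jow2011}, whose result is the codimension--neighborliness criterion for the Cox construction).

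Granting only the strict constraints, your argument gives $\lfloor r/2\rfloor$-neighborliness for odd $r$ and $(\lfloor r/2\rfloor-1)$-neighborliness for even $r$, which is in fact exactly what the paper's own argument establishes: there one shows $\mathrm{codim}\, Z(\chi_\theta)\ge \lfloor r/2\rfloor$ and then invokes Theorem \ref{thm:Jow} (codimension $\ge k+1$ gives $k$-neighborliness), so the body of the paper's proof also concludes only $(\lfloor r/2\rfloor-1)$-neighborliness in the even case. Your route is genuinely different in its outer layer: where the paper passes through GIT and toric geometry --- the perturbation and ampleness of Lemma \ref{lemma:WeightPertubation}, tightness via Proposition \ref{prop:VerifyTight}, the identification $Z(\chi_\theta)=Z^{\text{Cox}}(\Sigma_\theta)$ of Theorem \ref{thm:Equality_of_Cox_and_King}, and Jow's criterion --- you work directly with the facet presentation of the flow polytope and LP feasibility, with the same edge-connectivity count at the core (your inequality $\delta_Q(W)\le 2k-r$ is the paper's $|A_3|-|A_4|$ computation in contrapositive form). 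That is a legitimate and more elementary path, provided you supply simplicity of $P(Q,\theta)$ and the facet description for generic $\theta$ near $\delta_Q$ from Theorem \ref{thm:PQT} and tightness, and either restrict to the strict cases or find a new idea for the evenly-split minimum cuts.
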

 
 Our second result is more explicit. We verify that for a particular family of quivers associated to bipartite graphs the canonical weight is generic, and also deduce a better bound for $k$:

\begin{theorem}
\label{thm:BipartitePolytopes}
Let $K_{p,q}$ denote the complete, bipartite graph with $p$ vertices on one side of the bipartition, and $q$ vertices on the other. Suppose further that $p$ and $q$ are co-prime. If $Q_{p,q}$ is the quiver formed from $K_{p,q}$ by orienting all edges left-to-right, then $\delta_{Q_{p,q}}$ is generic, and $P^{\Delta}(Q_{p,q},\delta_{Q_{p,q}})$ is a $\left(pq - p -q +1\right)$-dimensional  $(\min\{p,q\}-1)$-neighborly polytope with $pq$ vertices.
\end{theorem}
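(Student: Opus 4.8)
The plan is to identify the flow polytope of $Q_{p,q}$ with a classical transportation polytope and then read everything off from its combinatorics. Since $Q_{p,q}$ is acyclic, its flow polytope $P(Q_{p,q},\delta_{Q_{p,q}})$ --- the bounded polytope of nonnegative flows with prescribed divergence $\delta_{Q_{p,q}}$ --- is, after unwinding the canonical weight, the transportation polytope
\[
T_{p,q}:=\Big\{x=(x_{ij})\in\mathbb{R}^{p\times q}_{\ge 0}\;:\;\textstyle\sum_j x_{ij}=q\ (\forall i),\ \sum_i x_{ij}=p\ (\forall j)\Big\}
\]
of nonnegative $p\times q$ matrices with all row sums $q$ and all column sums $p$; here I index the $pq$ arrows of $Q_{p,q}$ by pairs $(i,j)$, where $(i,j)$ is the arrow from the $i$-th left vertex to the $j$-th right vertex. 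The affine hull of $T_{p,q}$ has dimension $pq-(p+q)+1=pq-p-q+1$, and its candidate facets are $F_{ij}=\{x\in T_{p,q}:x_{ij}=0\}$, one per arrow. Assume throughout that $2\le p\le q$ (the case $\min\{p,q\}=1$ being degenerate).

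I would first settle genericity via the standard dictionary: $\theta$ is generic for $Q$ exactly when $P(Q,\theta)$ is a \emph{simple} polytope, which for a transportation polytope means non-degeneracy, i.e.\ the absence of a ``balanced split'' --- there are no nonempty proper subsets $I\subseteq[p]$, $J\subseteq[q]$ with $\sum_{i\in I}q=\sum_{j\in J}p$. Any such split would give $|I|\,q=|J|\,p$, forcing $p\mid|I|$ because $\gcd(p,q)=1$, hence $|I|\in\{0,p\}$ --- impossible. So $\delta_{Q_{p,q}}$ is generic and $T_{p,q}$ is non-degenerate: each vertex is supported on a spanning tree of $K_{p,q}$ and lies on exactly $pq-p-q+1=\dim T_{p,q}$ of the $F_{ij}$; moreover, since $K_{p,q}$ with any one edge deleted is still connected, each $F_{ij}$ contains a vertex and is therefore a genuine facet. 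Consequently $P^{\Delta}(Q_{p,q},\delta_{Q_{p,q}})$ is a $(pq-p-q+1)$-dimensional polytope with exactly $pq$ vertices $\{v_{ij}\}$, one per arrow --- this is the explicit vertex presentation that the machinery behind Theorem~\ref{thm:main} predicts for a generic weight.

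For neighborliness, recall that polar duality identifies $v_{ij}$ with the facet $F_{ij}$, so $\{v_{ij}:(i,j)\in Z\}$ spans a face of $P^{\Delta}$ iff $\bigcap_{(i,j)\in Z}F_{ij}\neq\emptyset$; and since $T_{p,q}$ is simple, any nonempty intersection of $|Z|$ of its facets is automatically a face of dimension $\dim T_{p,q}-|Z|$ (pass to the vertex figure --- a simplex --- at a vertex of the intersection), so the corresponding $|Z|$ vertices of the simplicial polytope $P^{\Delta}$ do form a face. Thus it suffices to prove: for every set $Z$ of at most $\min\{p,q\}-1=p-1$ arrows, the transportation problem on $K_{p,q}$ with the arrows of $Z$ deleted, with margins $q$ and $p$, is feasible. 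By the Gale--Hoffman criterion (max-flow--min-cut on the supply/demand network), infeasibility would require subsets $I\subseteq[p]$, $K\subseteq[q]$ with $I\times K\subseteq Z$ and $p(q-|K|)<q|I|$; this cannot happen if $I$ or $K$ is empty, and otherwise $|I|\cdot|K|\le|Z|\le p-1$, so $1\le|I|\le p-1$ and $|K|\le(p-1)/|I|$, whence, using the elementary bound $|I|(p-|I|)\ge p-1$ on $1\le|I|\le p-1$ together with $q\ge p$,
\[
p(q-|K|)=pq-p|K|\;\ge\;pq-\frac{p(p-1)}{|I|}\;\ge\;pq-p(p-|I|)=pq-p^2+p|I|\;\ge\;q|I|,
\]
a contradiction. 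Hence $P^{\Delta}(Q_{p,q},\delta_{Q_{p,q}})$ is $(\min\{p,q\}-1)$-neighborly.

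The two places I expect to require the most care are, first, the combinatorial input of the second paragraph --- the equivalence ``generic $\Leftrightarrow$ no balanced split $\Leftrightarrow$ $P(Q,\theta)$ simple'' and the fact that every inequality $x_{ij}\ge 0$ is facet-defining, which is exactly what pins the vertex count at $pq$ --- and, second, the extremal case $|Z|=\min\{p,q\}-1$ of the estimate, where the min-cut inequality is tight precisely when $p=q$, so the argument genuinely uses that coprime integers $\ge 2$ are distinct ($q>p$). Finally, the bound is sharp: deleting a full column of $p=\min\{p,q\}$ arrows forces that column sum to vanish, so those $p$ vertices of $P^{\Delta}$ span no face; since $\min\{p,q\}-1$ roughly doubles $\lfloor\min\{p,q\}/2\rfloor$, this improves Theorem~\ref{thm:main} for these quivers.
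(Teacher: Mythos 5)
Your proposal is correct, and it takes a genuinely different route from the paper. For genericity the two arguments coincide in substance: the paper's Lemma \ref{lemma:BipartiteGeneric} is exactly your coprimality computation ($q|I|=p|J|$ forces $|I|\in\{0,p\}$). For neighborliness, however, the paper stays inside the toric/GIT framework of Theorem \ref{thm:main}: it bounds the codimension of the unstable locus by estimating, for every successor-closed set $V$, the number $|A_4|=n_L(q-n_R)$ of arrows leaving $V$ (Lemma \ref{lemma:BipartiteNeighborly}), then passes through $Z(\chi_{\delta_Q})=Z^{\text{Cox}}(\Sigma_{\delta_Q})$ (Theorem \ref{thm:Equality_of_Cox_and_King}) and Jow's criterion (Theorem \ref{thm:Jow}) before dualizing. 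You instead argue entirely polyhedrally: identify $P(Q_{p,q},\delta_{Q_{p,q}})$ with the transportation polytope with margins $(q,\dots,q;p,\dots,p)$, use coprimality to get nondegeneracy (hence simplicity, exactly $pq$ facets, and dimension $(p-1)(q-1)$), prove that any $\le \min\{p,q\}-1$ facets meet via a Gale--Hoffman/max-flow--min-cut feasibility estimate, and convert nonempty facet intersections into faces of the simplicial dual by polarity and simplicity. Your cut inequality carries the same combinatorial content as the paper's bound $|A_4|\ge q$, but is reached through cuts rather than successor-closed sets; your route is more elementary (no moduli spaces, Cox quotients, or Jow) and yields as a bonus the sharpness observation (deleting a full column shows $\min\{p,q\}-1$ cannot be improved), which the paper does not record. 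What the paper's machinery buys is generality: it is what proves Theorem \ref{thm:main} for arbitrary acyclic quivers and perturbed weights, where no transportation-polytope model exists.

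Two points should be tightened. First, you invoke the dictionary ``$\theta$ generic $\Leftrightarrow$ $P(Q,\theta)$ simple,'' but you only need the implication ``no balanced split $\Rightarrow$ generic,'' which follows directly from Proposition \ref{Prop:Stability} and the definitions: a semistable-but-not-stable subquiver yields a proper nonempty successor-closed $V$ with $\sum_{i\in V}\delta_Q(i)=0$, i.e.\ a balanced split. Argue that way rather than asserting the converse, which you neither need nor justify. Second, ``$K_{p,q}$ minus an edge is connected'' is not by itself the reason $F_{ij}\neq\emptyset$; nonemptiness of $F_{ij}$ is a feasibility statement, and it is precisely the $|Z|=1$ instance of your own min-cut estimate (valid since $p\ge 2$). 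That instance is also what pins the facet count, and hence the vertex count of $P^{\Delta}(Q_{p,q},\delta_{Q_{p,q}})$, at $pq$.
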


In Section \ref{sec:poly} we describe an explicit algorithm for computing the vertex presentation of $P^{\Delta}(Q,\delta_{Q})$ and we provide sage code which does so. \\

 The polytopes constructed in Theorem \ref{thm:BipartitePolytopes}, or indeed in any case where $\delta_{Q}$ can be shown to be generic, enjoy a number of other desirable properties. For example, they are naturally lattice polytopes, and further are smooth and reflexive.  Recent work of Assarf \emph{et. al.} (see \cite{Assarf2014}) classified all smooth reflexive $d$-dimensional polytopes on at least $3d-2$ vertices. Since the polytopes constructed above always have fewer than $3d-2$ vertices, they are not covered by this list.

\subsection{Application to Compressed Sensing}
In \cite{Donoho2005N,Donoho2005} Donoho and Tanner show that if $P$ is a $k$-neighborly polytope with vertices $\bfv_1,\ldots,\bfv_n\subset\mathbb{R}^{d}$ then the matrix $A_{P} = [\bfv_1,\ldots, \bfv_n]$ is a good matrix for compressed sensing in that any $\bfx^{*} \in \mathbb{R}^{n}$ with $|\text{supp}(\bfx^{*})| \leq k$ and $\bfx \geq 0$ may be recovered from $\bfy = A_{P}\bfx^{*}$ as the unique solution to the linear programming problem:
$$
\text{argmin}_{\bfx\in\mathbb{R}^{d}} \|\bfx\|_{1} \text{ subject to: } A_{P}\bfx = \bfy
$$
Explicitly constructing matrices with this property (known as the $\ell_0$-$\ell_1$ equivalence) remains an interesting problem. In particular, one would like $d$ to be as small as possible, relative to $n$ and $k$. Probabilistic constructions yield $A\in \mathbb{R}^{d\times n}$ having $\ell_0$-$\ell_1$ equivalence with high probability for $d = O(k\log(n/k))$. Deterministic constructions \cite{Devore2007,Li2012} are not as successful, with the $\ell_0$-$\ell_1$ equivalence only guaranteed for $d = O(k^{2})$. Letting $A_{Q_{p,q}}$ denote the matrix whose columns are the vertices of $P^{\Delta}(Q_{p,q},\delta_{Q_{p,q}})$, Theorem \ref{thm:BipartitePolytopes} guarantees $A_{Q_{p,q}}$ has the $\ell_0$-$\ell_1$ equivalence property for $k = \text{min}(p,q)-1$, $n\approx k^2$ and $d \approx k^{2} - 2k$. We reiterate that $A_{Q_{p.q}}$ can rapidly be computed, by the results of \S \ref{sec:poly}. Moreover, $A_{Q_{p.q}}$ will be {\em sparse}, and have entries in $\{-1,0,1\}$. This could be useful when working with finite precision, or in the problem of sparse-integer recovery (\cite{Fukshansky2018}), where an integral sensing matrix is required.

 \subsection{Existing constructions of $k$-neighborly polytopes}
For small values of $d$ and $n$, there exist explicit enumerations of all combinatorial types of $\lfloor d/2\rfloor$-neighborly polytopes, for $d = 4$, $n = 8$ (\cite{Grunbaum1967}), $d = 4$, $n = 9$ (\cite{Altshuler1973}) and $d = 4$, $n = 10$ (\cite{Altshuler1977}), $d=5$, $n = 9$ \cite{Finbow2015}, \cite{Fukuda2013} and $d = 6$, $n = 10$\cite{Bokowski1987}. In high dimension, it is known that for large enough $n$ and $d = \delta n$ with $\delta\in (0,1)$ almost all randomly sampled polytopes are $k$-neighborly with $k \approx \rho_{N}(d/n)d$ (see \cite{Donoho2005N} for the definition of the neighborliness constant $\rho_{N}$ and more details). However, \emph{families} of $k$-neighborly polytopes as much less studied; although the \emph{Sewing} technique of Shemer (\cite{Shemer1982} and see also \cite{Padrol2013}) in principle iteratively describes an infinite family of explicit examples with $k = \lfloor d/2\rfloor$, they are all of the same dimension. The polytopes in Theorems \ref{thm:main} and \ref{thm:BipartitePolytopes} provide families of $k$-neighborly $d$-polytopes on $n$ vertices, where $k,n$ and $d$ all grow without bound.

 \subsection{Outline of the proof of Theorem \ref{thm:main}.} Given a quiver $Q = (Q_0,Q_1)$ and a weight $\theta:Q_{0} \to \mathbb{Z}$ with $\sum_{i\in Q_0}\theta(i) = 0$, there exist a space of thin-sincere representations which associate a one-dimensional vector space to each vertex and a linear map to each arrow. If we consider such representations up to isomorphism, then by work of King \cite{King1994} and Hille \cite{Hille1998} , there is an algebraic variety $\mathcal{M}(Q,\theta)$ that parametrizes the representations which satisfy an additional stability condition based on $\theta$. These representations are called $\theta$-semistable thin-sincere ones (see \S \ref{sec:ModuliSpace_Reps} for the definition of thin-sincere representations). $\mathcal{M}(Q,\theta)$ can be constructed as a GIT quotient 
 $\left( \mathbb{C}^{Q_1} \setminus Z(\chi_{\theta}) \right) /\!/\mathbb{C}^{(Q_0-1)}$
 where the loci $Z(\chi_{\theta})$ parametrizes the {\em unstable representations}. Of particular interest here is that $Z(\chi_{\theta})$  is completely described combinatorially by $Q$ and $\theta$, as detailed in \S \ref{sec:ModuliSpace_Reps}. In particular, we use this to obtain a lower bound on $\text{codim}\left(Z(\chi_{\theta})\right)$. \\
 
 If $Q$ is acyclic, $\mathcal{M}(Q, \theta)$ is in fact a projective toric variety and hence may also be constructed from the data of a lattice polytope. It turns out that this polytope is the flow polytope $P(Q,\theta)$ whose construction is detailed in \S \ref{sec:poly}. If we denote by $\Sigma(Q,\theta)$ the normal fan of $P(Q,\theta)$ then by results of \cite{Cox1996} there is a canonical quotient of the form 
$
\left( \mathbb C^{\Sigma(1)} \setminus  Z^{\text{Cox}}\left(\Sigma(Q,\theta) \right) \right)/ G_{\Sigma}
$ 
where $G_{\Sigma}$ is a group to be described in \S \ref{sec:ToricVarieties}. In \S \ref{sec:UnstableLocus}, we show, under certain assumptions on $\theta$, that $ Z^{\text{Cox}}\left(Q,\theta)\right) = Z(\chi_{\theta})$. The aforementioned lower bound on $\text{codim}\left(Z(\chi^{\theta})\right)$ thus becomes a lower bound on $\text{codim}\left(Z^{\text{Cox}}\left(\Sigma(Q,\theta)\right)\right)$. We conclude our proof by using a result from \cite{Jow2011} that relates the  the codimension of $Z^{\text{Cox}}\left(\Sigma(Q,\theta)\right)$ with the $k$-neighborliness of $P^{\Delta}(Q,\theta)$. \\

The rest of the paper is laid out as follows. In \S \ref{sec:Basics} we collect some definitions and elementary properties of the main objects of study in this paper: polytopes, toric varieties and quiver representations. We do not aim to be encyclopedic, but rather use this section to establish notation and point the interested reader to the relevant literature.  \S \ref{sec:MainResults} contains the main results of this paper. Specifically, we detail the algorithm for computing the vertex presentation of $P^{\Delta}(Q,\delta_{Q})$ in \S \ref{sec:poly} while in \S  \ref{sec:UnstableLocus}--\ref{sec:mainProof} we provide the details of the proof of Theorem \ref{thm:main} outlined above. Finally in \S \ref{sec:ProofBipartite} we prove Theorem \ref{thm:BipartitePolytopes}.

\subsection{Acknowledgements}
 P.G is grateful for the working environment of the Department of Mathematics in Washington
University at St. Louis and the University of Georgia at Athens. D.M. also thanks the department of Mathematics at the University of Georgia, and acknowledges the financial support of the National Research Foundation (NRF) of South Africa.

\section{Definitions and Basic Properties}
\label{sec:Basics}

 We shall work exclusively over the complex numbers, $\mathbb{C}$. We denote the multiplicative group of $\mathbb{C}$ as $\mathbb{C}^{\times}$.  We begin with the necessary background on polytopes and quivers, see \cite{Ziegler2012}, \cite{Craw2008b}, \cite{Hille1998}, and \cite{Joo2015},  for a more detailed exposition of such topics.

\subsection{Polytopes and Fans}
\label{sec:PolytopesAndFans}
 By $d$ dimensional polytope ($d$-polytope for short) we mean the convex hull of a finite set of points in $\mathbb{R}^{d}$:
\begin{equation*}
P = \text{conv}(\{v_1,\ldots, v_n\}) := \{\sum_{j=1}^{n}\lambda_{j}v_{j}: \ \sum_{j=1}^{n}\lambda_{j} = 1 \text{ and } \lambda_{j} \geq 0 \text{ for all } j\} \subset \mathbb{R}^{d}
\end{equation*}
Suppose that we fix a lattice, $M \cong \mathbb{Z}^{d}$, and consider a polytope $P\subset M\otimes \mathbb{R} \cong \mathbb{R}^{d}$. We say $P$ is a \emph{lattice polytope} if its vertices are all lattice points (that is, elements of $M$). Let $N$ denote the dual lattice to $M$. That is, $N = M^{\vee}:= \text{Hom}_{\mathbb{Z}}(N,\mathbb{Z})$. For a $d$-polytope with $0\in P$, we define its polar, or dual polytope as:
\begin{equation*}
P^{\Delta} := \{\bfc\in\mathbb{R}^{d} : \bfc\cdot \bfx \leq 1 \text{ for all }\bfx\in P\} \subset \left(\mathbb{R}^{d}\right)^{\vee}
\end{equation*}
If $P$ is an $M$-lattice polytope, it is not always the case that $P^{\Delta}$ is an $N$-lattice polytope. If this property holds, we say that $P$ is \emph{reflexive}. \\

 A polyhedral cone is the conic hull of a finite set of points $S\subset \mathbb{R}^{d}$:
\begin{equation*}
\sigma = \text{cone}(S) := \{\sum_{i}^{\ell}\lambda_iv_i: \ \ell < \infty , \ \{v_1,\ldots, v_{\ell}\}\subset S \text{ and } \lambda_{i} \geq 0\ \forall i \}
\end{equation*}
We say a cone is \emph{strictly convex} if it does not contain any full lines. As for polytopes, if we fix a lattice $N$ and consider a $\sigma \subset N\otimes \mathbb{R} \cong \mathbb{R}^{d}$, we say that $\sigma$ is a \emph{rational, polyhedral cone} if there exists a finite set of lattice points $S\subset N$ such that $\sigma = \text{cone}(S)$. We typically think of cones and polytopes as living in spaces dual to each other.

\begin{definition}
A \emph{fan} is a finite set of strictly convex, polyhedral, rational (with respect to a lattice $N$) cones $\mathcal{F} = \{C_1,\ldots, C_n\}$ such that:
\begin{enumerate}
\item Every face of a cone in $\mathcal{F}$ is a cone in $\mathcal{F}$ and
\item $C_i\cap C_j\in\mathcal{F}$ for any $C_i,C_j\in\mathcal{F}$. 
\end{enumerate}
\end{definition}
We say that a polyhedral, rational cone $\sigma$ is \emph{simplicial} if the minimal generators of its rays are independent over $\mathbb{R}$ (see Definition 1.2.16 in \cite{Cox2011}). A fan $\Sigma$ will be called simplicial if every $\sigma \in \Sigma$ is simplicial. \\

Let $P$ be a $M$-lattice polytope such that  $0$ is an interior point of $P$. This condition is not a restriction if $P$ is full dimensional, as we may then always translate $P$. There are two natural fans associated to $P$. 
First,  its \emph{face fan} $\mathcal{F}(P)\subset M\otimes\mathbb{R}$ is defined to be the set of cones spanned by proper faces of $P$ (see \cite[Sec. 7.1]{Ziegler2012})
\begin{equation}
\label{def:ffan}
\mathcal{F}(P) := \{\text{cone}(F) : \ F \text{ a proper face of } P\}.
\end{equation}
Here $\text{cone}(F) = \{\sum_{i}^{\ell}\lambda_iv_i: \ \ell < \infty , \ \{v_1,\ldots, v_{\ell}\}\subset F \text{ and } \lambda_{i} \geq 0\ \forall i \}$. On the other hand, the \emph{normal fan} $\mathcal N (P)\subset N\otimes \mathbb{R}$ of a polytope is defined to be the set of cones  $\{N_{F}: \ F \text{ a non-empty face of }P\}$ where:
\begin{equation*}
	N_{F} := \{\bfc\in \mathbb{R}^{d} : \ F\subset \{\bfx\in P : \ \bfc\cdot \bfx = \max_{\bfy\in P}\bfc\cdot\bfy\}\}
\end{equation*}	
	  Both constructions are related by the following result
\begin{proposition}[See Chpt. 7 of \cite{Ziegler2012}]
\label{thm:Polars}
For any polytope $P$ with $\mathbf{0}$ in its interior, $\mathcal{F}(P) \cong \mathcal{N}(P^{\Delta})$ and $\mathcal{N}(P) \cong \mathcal{F}(P^{\Delta})$ where $P^{\Delta}$ is the dual polytope of $P$.
\end{proposition}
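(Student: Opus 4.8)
The plan is to derive both isomorphisms from the classical order-reversing duality of face lattices. Recall (\cite[Ch.~2]{Ziegler2012}) that since $\mathbf 0\in\operatorname{int}(P)$ one has biduality $P^{\Delta\Delta}=P$, and that the assignment
\[
F\ \longmapsto\ F^{\diamond}:=\{\bfc\in P^{\Delta}:\ \bfc\cdot\bfx=1\ \text{for all }\bfx\in F\}
\]
is an inclusion-reversing bijection from the faces of $P$ onto the faces of $P^{\Delta}$, interchanging the improper faces ($\emptyset\leftrightarrow P^{\Delta}$ and $P\leftrightarrow\emptyset$) and satisfying $F^{\diamond\diamond}=F$. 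The crux of the argument is the pointwise identity
\begin{equation}\label{eq:cone-eq-normal}
\operatorname{cone}(F)\ =\ N_{F^{\diamond}}\qquad\text{for every proper face }F\text{ of }P,
\end{equation}
where on the right $N_{F^{\diamond}}$ denotes the normal cone of the face $F^{\diamond}$ of the polytope $P^{\Delta}$. This makes sense in a single ambient space, since $\mathcal F(P)\subset M\otimes\mathbb R$ while $\mathcal N(P^{\Delta})\subset(N\otimes\mathbb R)^{\vee}=M\otimes\mathbb R$. Granting \eqref{eq:cone-eq-normal}, the first assertion follows at once: as $F$ ranges over the proper faces of $P$, its image $F^{\diamond}$ ranges over all nonempty faces of $P^{\Delta}$ (the improper face $\emptyset$ of $P$ contributing $P^{\Delta}$ itself, with $\operatorname{cone}(\emptyset)=\{\mathbf 0\}=N_{P^{\Delta}}$), whence $\mathcal F(P)=\{\operatorname{cone}(F)\}=\{N_{F^{\diamond}}\}=\mathcal N(P^{\Delta})$, in particular the asserted isomorphism. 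The second assertion $\mathcal N(P)\cong\mathcal F(P^{\Delta})$ then follows by applying the first with $P$ replaced by $P^{\Delta}$ — legitimate because $P$ is bounded, so $P^{\Delta}$ is again a polytope with $\mathbf 0$ in its interior — together with $P^{\Delta\Delta}=P$.

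To establish \eqref{eq:cone-eq-normal} I would argue by double inclusion, handling $F=\emptyset$ trivially and assuming henceforth that $F$ is a proper nonempty face, so that $F^{\diamond}$ is too. Here $\operatorname{cone}(F)=\{\lambda\bfx:\lambda\ge 0,\ \bfx\in F\}$ because $F$ is convex. For ``$\subseteq$'': given $\bfx\in F$ and $\lambda\ge 0$ we have $(\lambda\bfx)\cdot\bfc=\lambda(\bfx\cdot\bfc)\le\lambda$ for every $\bfc\in P^{\Delta}$, with equality whenever $\bfc\in F^{\diamond}$; since $F^{\diamond}\ne\emptyset$, the value $\max_{\bfc\in P^{\Delta}}(\lambda\bfx)\cdot\bfc$ equals $\lambda$ and is attained on $F^{\diamond}$, so $\lambda\bfx\in N_{F^{\diamond}}$. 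For ``$\supseteq$'': take $\bfy\in N_{F^{\diamond}}$ and set $m=\max_{\bfc\in P^{\Delta}}\bfy\cdot\bfc$. If $\bfy=\mathbf 0$ we are done; otherwise, since $P$ is bounded $\mathbf 0$ lies in the interior of $P^{\Delta}$, which forces $m>0$. Then $\bfy\cdot\bfc\le m$ for all $\bfc\in P^{\Delta}$ shows $\bfy/m\in P^{\Delta\Delta}=P$, while $\bfy\cdot\bfc=m$ for all $\bfc\in F^{\diamond}$ (because $F^{\diamond}$ lies in the argmax) shows $\bfy/m\in F^{\diamond\diamond}=F$; hence $\bfy=m\,(\bfy/m)\in\operatorname{cone}(F)$.

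The substantive ingredients — biduality and the face anti-isomorphism — are entirely standard and I would simply cite \cite{Ziegler2012}; the remainder is the elementary computation above. The only points requiring care are bookkeeping ones: tracking the improper faces $\emptyset$ and $P$ through the correspondence, and the repeated but essential use of $\mathbf 0\in\operatorname{int}(P)$ — once to normalize each supporting functional of a proper face of $P$ to value $1$ (so that $F^{\diamond}\subset P^{\Delta}$ genuinely), and once, via the boundedness of $P$, to guarantee $\mathbf 0\in\operatorname{int}(P^{\Delta})$ and hence $m>0$ in the second inclusion. I do not anticipate a serious obstacle.
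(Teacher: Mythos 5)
Your argument is correct. The paper itself offers no proof of this proposition --- it is stated with a citation to Chapter 7 of \cite{Ziegler2012} --- so there is nothing to diverge from; what you have written is precisely the standard argument behind the cited fact: the inclusion-reversing face conjugation $F\mapsto F^{\diamond}$ together with biduality $P^{\Delta\Delta}=P$, reduced to the pointwise identity $\operatorname{cone}(F)=N_{F^{\diamond}}$, which your double-inclusion computation verifies correctly against the paper's definitions of $\operatorname{cone}(F)$ and of the normal cone (note that the containment condition $F^{\diamond}\subset\operatorname{argmax}$ in the paper's definition of $N_{F^{\diamond}}$ is exactly what your second inclusion uses, so larger-than-exact normal cones cause no trouble). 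Two small remarks: your proof in fact yields the stronger statement that $\mathcal{F}(P)$ and $\mathcal{N}(P^{\Delta})$ are \emph{equal} as fans in $M\otimes\mathbb{R}$, not merely isomorphic, which is all the paper needs downstream (Lemma \ref{lemma:Neighborly_Fans_and_Polytopes}); and the bookkeeping for the improper faces is consistent because $P^{\Delta}$ is full dimensional (as $P$ is bounded), so $N_{P^{\Delta}}=\{\mathbf{0}\}=\operatorname{cone}(\emptyset)$ as you say. No gap.
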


For a polytope $P$, denote by $P(k)$ the set of $k$ dimensional faces of $P$. Similarly, for a fan $\Sigma$, denote by $\Sigma(k)$ the set of $k$-dimensional cones in $\Sigma$.
 
\begin{definition}
We say that a polytope $P$ is $k$-neighborly if every set of $k$ vertices $\{v_{i_1},\ldots, v_{i_k}\}$, span a face of $P$. That is: 
\begin{equation*}
\text{conv}\left(\{v_{i_1},\ldots, v_{i_k}\}\right) \in P(k)
\end{equation*}
Equivalently, $P$ is $k$-neighborly if for all $1\leq \ell \leq k$, $\left| P(\ell) \right| = \left(\begin{array}{c} n \\ \ell+1 \end{array}\right)$ 
\end{definition}

\begin{definition}
We shall say that a fan $\Sigma$ is $k$-neighborly if every set of $k$ rays $\{\sigma_{i_1},\ldots,\sigma_{i_k}\} \subset \Sigma(1)$, generate a cone in $\Sigma$. That is: 
\begin{equation*}
\text{cone}\left(\{\sigma_{i_1},\ldots,\sigma_{i_k}\}\right) \in \Sigma(k)
\end{equation*}
Equivalently, $\Sigma$ is $k$-neighborly if for all $1\leq \ell \leq k$, $\left| \Sigma(\ell) \right| = \left(\begin{array}{c} n \\ \ell+1 \end{array}\right)$ 
\end{definition}

The notions of neighborliness for cones and fans are related as follows:

\begin{lemma}
A polytope $P$ is $k$-neighborly if and only if its face fan $\mathcal{F}(P)$ is.
\label{thm:Neighborly}
\end{lemma}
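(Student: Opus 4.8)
The plan is to reduce the statement to the classical inclusion‑preserving correspondence between the proper faces of $P$ and the nonzero cones of its face fan $\mathcal F(P)$, keeping careful track of the shift in dimension. First I would record the structural fact: because $\mathbf 0$ is an interior point of $P$, every proper face $F$ of $P$ is disjoint from $\mathbf 0$ and lies in a supporting hyperplane $H$ with $\mathbf 0\notin H$; hence $\mathrm{aff}(F)$ misses $\mathbf 0$, so $\mathrm{cone}(F)$ is strictly convex with $\dim\mathrm{cone}(F)=\dim F+1$. From this (or, alternatively, from Proposition~\ref{thm:Polars} together with the inclusion‑reversing face correspondence between $P$ and $P^{\Delta}$ and the standard description of a normal fan) the map $F\mapsto\mathrm{cone}(F)$ is an inclusion‑preserving bijection from the proper faces of $P$ onto the nonzero cones of $\mathcal F(P)$ that raises dimension by one. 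In particular it restricts to a bijection between the vertices $v$ of $P$ and the rays $\mathbb R_{\ge 0}v$ of $\mathcal F(P)$, and it sends a face $F$ with vertex set $\{v_{i_1},\dots,v_{i_m}\}$ to $\mathrm{cone}(\{v_{i_1},\dots,v_{i_m}\})$, whose extreme rays are exactly $\mathbb R_{\ge 0}v_{i_1},\dots,\mathbb R_{\ge 0}v_{i_m}$ — this last point using $F=\mathrm{conv}(\{v_{i_1},\dots,v_{i_m}\})$.

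With this dictionary the equivalence is immediate in both directions. If $P$ is $k$‑neighborly, take any $k$ rays of $\mathcal F(P)$; they correspond to $k$ distinct vertices $v_{i_1},\dots,v_{i_k}$ of $P$ (distinct because distinct vertices span distinct rays), which span a face $F=\mathrm{conv}(\{v_{i_1},\dots,v_{i_k}\})$, and then $\mathrm{cone}(F)=\mathrm{cone}(\{v_{i_1},\dots,v_{i_k}\})$ is a cone of $\mathcal F(P)$ generated precisely by the chosen $k$ rays; hence $\mathcal F(P)$ is $k$‑neighborly. Conversely, if $\mathcal F(P)$ is $k$‑neighborly, take any $k$ vertices $v_{i_1},\dots,v_{i_k}$ of $P$; the corresponding $k$ rays generate a cone $\sigma\in\mathcal F(P)$, which by the correspondence equals $\mathrm{cone}(F)$ for a proper face $F$ of $P$ whose vertex set is exactly $\{v_{i_1},\dots,v_{i_k}\}$ (the extreme rays of $\sigma$ being those $k$ rays), so $F=\mathrm{conv}(\{v_{i_1},\dots,v_{i_k}\})$ is a face of $P$; hence $P$ is $k$‑neighborly. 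The same argument, read through the ``$|P(\ell)|=\binom{n}{\ell+1}$'' reformulations in the two definitions, gives the equivalence at the level of $f$‑vectors, using $|\mathcal F(P)(\ell)|=|P(\ell-1)|$ and the fact that $P$ and $\mathcal F(P)$ have the same number $n$ of vertices/rays.

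I do not expect a real obstacle: this is a bookkeeping lemma resting on the standard polar/face‑fan correspondence already alluded to in Proposition~\ref{thm:Polars}. The only points that need a sentence of care are (i) that $\mathbf 0\in\mathrm{int}(P)$ forces distinct vertices of $P$ to give distinct rays of $\mathcal F(P)$ and forces $\mathrm{cone}(F)$ to be strictly convex of dimension $\dim F+1$, and (ii) that the extreme rays of $\mathrm{cone}(F)$ are exactly the rays through the vertices of $F$, so that ``a cone generated by $k$ prescribed rays'' on the fan side corresponds to ``a face whose vertex set is the $k$ prescribed vertices'' on the polytope side. Both follow from $F=\mathrm{conv}(V(F))$ and the supporting‑hyperplane description of faces.
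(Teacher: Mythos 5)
Your proof is correct and follows essentially the same route as the paper's: both rest on the inclusion-preserving correspondence $F \mapsto \mathrm{cone}(F)$ between proper faces of $P$ and cones of $\mathcal{F}(P)$, with the paper stating this correspondence in one line and you supplying the (worthwhile) details about strict convexity, the dimension shift $\dim\mathrm{cone}(F)=\dim F+1$, and the matching of vertices with extreme rays. No gap; your version is simply a more careful write-up of the same argument.
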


\begin{proof}
By construction $P(\ell) \cong \left(\mathcal{F}(P)\right)(\ell)$, where the isomorphism is given by $F \mapsto \text{cone}(F)$. Hence$\left| P(\ell) \right| = \left(\begin{array}{c} n \\ \ell+1 \end{array}\right)$ for all $1\leq \ell \leq k$ if and only if $\left| \Sigma(\ell) \right| = \left(\begin{array}{c} n \\ \ell+1 \end{array}\right)$ for all $1\leq \ell \leq k$.
\end{proof}

Combining Propositions \ref{thm:Neighborly} and \ref{thm:Polars} we observe that:

\begin{lemma}
Let $P$ be a polytope with $\mathbf{0}\in \text{int}(P)$. If $\mathcal{N}(P)$ is a $k$-neighborly fan, then $P^{\Delta}$ is a $k$-neighborly polytope.
\label{lemma:Neighborly_Fans_and_Polytopes}
\end{lemma}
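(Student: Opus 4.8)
The plan is to simply chain together the two facts just established. First I would note that the hypothesis $\mathbf{0}\in\mathrm{int}(P)$ guarantees that $P^{\Delta}$ is again a (bounded) polytope with $\mathbf{0}$ in its interior; this is a standard fact about polarity of polytopes (see \cite[Chpt. 7]{Ziegler2012}), and it is exactly what is needed so that the face fan $\mathcal{F}(P^{\Delta})$, the normal fan $\mathcal{N}(P^{\Delta})$, and Lemma \ref{thm:Neighborly} all make sense when applied to $P^{\Delta}$.

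Next I would invoke Proposition \ref{thm:Polars}, in the form $\mathcal{N}(P)\cong\mathcal{F}(P^{\Delta})$. The one point deserving a sentence of justification — though it is immediate from the definitions — is that $k$-neighborliness of a fan is a combinatorial invariant: it is phrased entirely in terms of the cardinalities $|\Sigma(\ell)|$ for $1\le\ell\le k$ and the number of rays $|\Sigma(1)|$, and a fan isomorphism is by definition a bijection of cones preserving the face relation, hence preserves all of these cardinalities. Consequently, since $\mathcal{N}(P)$ is $k$-neighborly by hypothesis, so is $\mathcal{F}(P^{\Delta})$.

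Finally I would apply Lemma \ref{thm:Neighborly} to the polytope $P^{\Delta}$ (legitimate by the first paragraph): a polytope is $k$-neighborly if and only if its face fan is. Having just shown $\mathcal{F}(P^{\Delta})$ is $k$-neighborly, we conclude that $P^{\Delta}$ is $k$-neighborly, which is the assertion.

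I do not expect a real obstacle here; the argument is a two-step composition of the preceding lemma and proposition. The only items requiring a moment's care are recording that $\mathbf{0}\in\mathrm{int}(P^{\Delta})$ so that Lemma \ref{thm:Neighborly} applies to $P^{\Delta}$, and observing explicitly that a fan isomorphism transports the numerical data defining neighborliness.
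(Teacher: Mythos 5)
Your argument is correct and follows exactly the route the paper intends: the lemma is stated there as an immediate consequence of combining Proposition \ref{thm:Polars} (via $\mathcal{N}(P)\cong\mathcal{F}(P^{\Delta})$) with Lemma \ref{thm:Neighborly} applied to $P^{\Delta}$. Your added remarks that $\mathbf{0}\in\mathrm{int}(P)$ makes $P^{\Delta}$ a bona fide polytope and that fan isomorphisms preserve the cardinalities defining neighborliness are exactly the small details the paper leaves implicit.
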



\subsection{Toric Varieties}
\label{sec:ToricVarieties}
A toric variety is an algebraic variety $X$ containing an algebraic torus $T$---that is, $T\cong (\mathbb{C}^{\times})^{d}$---as a dense subset such that the natural action of $T$ on itself extends to all of $X$. Given a fan $\Sigma$, integral with respect to a lattice $N$, there is a canonical construction of a toric variety $X_{\Sigma}$ containing the torus $T_{N} := N\otimes\mathbb{C}^{\times}$. We refer the reader to \cite{Cox2011} for further details on this construction. For all the relevant cases in our work $X_{\Sigma}$ does not have torus factors, equivalently the maximal cones of $\Sigma$ span $N \otimes R$ . We will assume such hypotheses without further mentioning them. \\

If $\Sigma$ is the normal fan to some polytope $P$, then $X_{\Sigma}$ is in fact \emph{projective} and many geometric invariants of the variety $X_{\Sigma}$ can be computed from the data of $\Sigma$, $P$, and $N$. In particular, let $\text{Cl}\left(X_{\Sigma}\right)$ be the divisor class group of $X_{\Sigma}$ {\em i.e.} the group of all divisors on $X_{\Sigma}$ modulo linear equivalence. To every ray $\rho\in \Sigma(1)$ there is associated a unique prime, torus invariant divisor $D_{\rho}$ and the group of torus invariant divisors on $X_{\Sigma}$ may be identified with $\bigoplus_{\rho \in \Sigma(1)}\mathbb{Z}D_{\rho} \cong \mathbb{Z}^{\Sigma(1)}$. The divisor class group may be computed from the short exact sequence:

\begin{equation}
0 \to M \to \mathbb{Z}^{\Sigma(1)} \xrightarrow{\text{div}} \text{Cl}(X_{\Sigma}) \to 0
\label{equation:SESCox}
\end{equation}

Here $M$ is the dual lattice to $N$, and the map $\text{div}$ sends the torus-invariant divisor represented by $\bfx \in \mathbb{Z}^{\Sigma(1)}$, namely $\sum_{\rho \in \Sigma(1)} x_{\rho} D_{\rho}$, to its equivalence class: $\text{div}(\bfx) = \left[\sum_{\rho \in \Sigma(1)} x_{\rho} D_{\rho}\right]$. For further details on divisors on toric varieties, see Chpt. 4 of \cite{Cox2011}. \\

Applying $\text{Hom}_{\mathbb{Z}}(\cdot, \mathbb{C}^{\times})$ to \eqref{equation:SESCox}
and letting $G_{\Sigma}$ denote the group $\text{Hom}_{\mathbb{Z}}\left(\text{Cl}(X_{\Sigma}), \mathbb{C}^{\times}\right)$ we get:
\begin{equation*}
1 \to G_{\Sigma} \to (\mathbb{C}^{\times})^{\Sigma(1)} \to T_{N} \to 1
\end{equation*}
because $\text{Hom}_{\mathbb{Z}}(M,\mathbb{C}^{\times}) \cong N\otimes \mathbb{C}^{\times} = T_{N}$. Note that $G_{\Sigma}$ acts on $\mathbb{C}^{\Sigma(1)}$ via its embedding into $(\mathbb{C}^{\times})^{\Sigma(1)}$.

\begin{definition}(see \cite[Chp 5]{Cox2011})
Let $\Sigma$ be a fan and for each $\rho\in\Sigma(1)$ introduce a coordinate $x_{\rho}$ on $\mathbb C^{\Sigma(1)}$.
Define $Z^{\text{Cox}}(\Sigma)$ as the vanishing locus
$$
Z^{\text{Cox}}(\Sigma):=
\left\{ \prod_{\rho \not\in \sigma } x_{\rho} = 0  \; \bigg| \; \sigma \; \;
\text{is a cone of maximal dimension}
\right\}.
$$
In particular, $Z^{\text{Cox}}(\Sigma)$ is a union of coordinate subspaces.
\end{definition}

\begin{theorem}[Theorem 2.1 in \cite{Cox1996}]
Suppose that $\Sigma$ is simplicial. Then:
\begin{equation*}
X_{\Sigma} \cong \left(\mathbb{C}^{\Sigma(1)}- Z^{\text{Cox}}(\Sigma)\right)/ G_{\Sigma}
\end{equation*}
\end{theorem}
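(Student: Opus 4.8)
The plan is to prove the isomorphism affine-locally and then glue, following the standard approach to toric geometric invariant theory (see also \cite[Chapter 5]{Cox2011}). Recall that $X_{\Sigma}$ is covered by the affine toric varieties $U_{\sigma} = \operatorname{Spec}\mathbb{C}[S_{\sigma}]$, where $S_{\sigma} := \sigma^{\vee}\cap M$ and $\sigma$ ranges over the maximal cones of $\Sigma$; write $\sigma(1) := \{\rho\in\Sigma(1) : \rho\subset\sigma\}$ and let $u_{\rho}$ denote the minimal generator of the ray $\rho$. On the quotient side, setting $\bfx^{\widehat{\sigma}} := \prod_{\rho\not\in\sigma} x_{\rho}$, the principal open sets $W_{\sigma} := \{\bfx\in\mathbb{C}^{\Sigma(1)} : \bfx^{\widehat{\sigma}}\neq 0\}$ cover $\mathbb{C}^{\Sigma(1)} - Z^{\text{Cox}}(\Sigma)$ exactly by the definition of $Z^{\text{Cox}}(\Sigma)$, and each $W_{\sigma}\cong \mathbb{C}^{\sigma(1)}\times(\mathbb{C}^{\times})^{\Sigma(1)\setminus\sigma(1)}$ is affine with coordinate ring $R_{\sigma} := \mathbb{C}[x_{\rho} : \rho\in\sigma(1)]\otimes_{\mathbb{C}}\mathbb{C}[x_{\rho}^{\pm 1} : \rho\not\in\sigma(1)]$. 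Each $W_{\sigma}$ is $G_{\Sigma}$-stable, so it suffices to (i) identify $R_{\sigma}^{G_{\Sigma}}$ with $\mathbb{C}[S_{\sigma}]$; (ii) check that the resulting affine quotient maps $W_{\sigma}\to U_{\sigma}$ glue compatibly with the gluing that builds $X_{\Sigma}$ from the $U_{\sigma}$; and (iii) upgrade the categorical quotient so obtained to a geometric one when $\Sigma$ is simplicial.

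For (i) I would exploit the exact sequence \eqref{equation:SESCox}. Applying $\operatorname{Hom}_{\mathbb{Z}}(-,\mathbb{C}^{\times})$ shows that the $G_{\Sigma}$-action on $\mathbb{C}^{\Sigma(1)}$ grades the Laurent ring $\mathbb{C}[x_{\rho}^{\pm 1}]$ by $\operatorname{Cl}(X_{\Sigma})$, with the Laurent monomial $\bfx^{\bfa} = \prod_{\rho}x_{\rho}^{a_{\rho}}$ sitting in degree $\operatorname{div}(\bfa)\in\operatorname{Cl}(X_{\Sigma})$. Hence $\bfx^{\bfa}$ is $G_{\Sigma}$-invariant precisely when $\bfa\in\ker(\operatorname{div}) = \operatorname{im}\bigl(M\to\mathbb{Z}^{\Sigma(1)}\bigr)$, i.e.\ when $\bfx^{\bfa} = \bfx^{m} := \prod_{\rho}x_{\rho}^{\langle m,u_{\rho}\rangle}$ for some $m\in M$. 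Since $G_{\Sigma} = \operatorname{Hom}_{\mathbb{Z}}(\operatorname{Cl}(X_{\Sigma}),\mathbb{C}^{\times})$ is diagonalizable, hence linearly reductive, and acts diagonally, its invariants in $\mathbb{C}[x_{\rho}^{\pm 1}]$ are spanned by invariant monomials, so $\mathbb{C}[x_{\rho}^{\pm 1}]^{G_{\Sigma}} = \bigoplus_{m\in M}\mathbb{C}\cdot\bfx^{m}$; intersecting with the subring $R_{\sigma}$ forces $\langle m,u_{\rho}\rangle\geq 0$ for all $\rho\in\sigma(1)$. As $\sigma$ is a full-dimensional simplicial cone the $u_{\rho}$, $\rho\in\sigma(1)$, form a basis of $N_{\mathbb{R}}$ and $\sigma^{\vee} = \{m : \langle m,u_{\rho}\rangle\geq 0\ \forall\rho\in\sigma(1)\}$, so this condition says exactly $m\in S_{\sigma}$. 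Therefore $R_{\sigma}^{G_{\Sigma}} = \bigoplus_{m\in S_{\sigma}}\mathbb{C}\cdot\bfx^{m}\cong\mathbb{C}[S_{\sigma}]$, giving $\operatorname{Spec}R_{\sigma}^{G_{\Sigma}} = U_{\sigma}$ and a categorical quotient $W_{\sigma}\to U_{\sigma}$.

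For (ii): if $\sigma,\tau$ are maximal cones with common face $\gamma = \sigma\cap\tau$, then $W_{\sigma}\cap W_{\tau} = W_{\gamma}$ (the locus where $x_{\rho}\neq 0$ for all $\rho\not\in\gamma$), and there is $m_{0}\in S_{\sigma}$ with $\gamma = \sigma\cap m_{0}^{\perp}$ so that $W_{\gamma} = (W_{\sigma})_{\bfx^{m_{0}}}$ and hence $R_{\gamma}^{G_{\Sigma}} = (R_{\sigma}^{G_{\Sigma}})_{\bfx^{m_{0}}} = \mathbb{C}[S_{\gamma}]$; this is precisely the localization along which $U_{\gamma}$ is glued into $U_{\sigma}$ inside $X_{\Sigma}$, and symmetrically for $\tau$, so the identifications are compatible. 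Patching yields a morphism $\pi:\mathbb{C}^{\Sigma(1)} - Z^{\text{Cox}}(\Sigma)\to X_{\Sigma}$ which is a categorical quotient for the $G_{\Sigma}$-action because it is one Zariski-locally on the target. For (iii), when $\Sigma$ is simplicial one checks that the fibres of $\pi$ are single $G_{\Sigma}$-orbits and that the orbits on $\mathbb{C}^{\Sigma(1)} - Z^{\text{Cox}}(\Sigma)$ are closed with finite stabilizers: on $W_{\sigma}$ the coordinates $x_{\rho}$, $\rho\not\in\sigma(1)$, are units and may be normalized to $1$, reducing the stabilizer of a point to a subgroup of the torus acting on the independent coordinates $(x_{\rho})_{\rho\in\sigma(1)}$, where linear independence of the $u_{\rho}$ makes the relevant lattice map have finite cokernel and hence the stabilizer finite; closedness of orbits within $W_{\sigma}$ follows from the fact that $\bigoplus_{m\in S_{\sigma}}\mathbb{C}\bfx^{m}$ separates them. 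This makes $\pi$ a geometric quotient and completes the identification $X_{\Sigma}\cong(\mathbb{C}^{\Sigma(1)} - Z^{\text{Cox}}(\Sigma))/G_{\Sigma}$.

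I expect the technical heart to be the reverse inclusion $R_{\sigma}^{G_{\Sigma}}\subseteq\mathbb{C}[S_{\sigma}]$ in step (i): this is exactly where both the exactness of \eqref{equation:SESCox}—including the possibility that $\operatorname{Cl}(X_{\Sigma})$ has torsion, so that $G_{\Sigma}$ need not be connected—and the simpliciality of $\sigma$ are genuinely used, and where one must argue carefully that the monomials $\bfx^{m}$ exhaust the invariants rather than merely generating a subring. A secondary, more combinatorial subtlety is that $Z^{\text{Cox}}(\Sigma)$ is exactly the right locus to delete: deleting less would leave the $W_{\sigma}$ failing to cover, so $\pi$ would not surject onto $X_{\Sigma}$, while deleting more would discard genuine orbits; thus the identity $\bigcup_{\sigma}W_{\sigma} = \mathbb{C}^{\Sigma(1)} - Z^{\text{Cox}}(\Sigma)$ together with the orbit analysis of (iii) is what pins down the precise statement.
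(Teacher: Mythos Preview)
The paper does not give its own proof of this statement: it is quoted as Theorem 2.1 of \cite{Cox1996} and used as a black box. Your proposal is a correct outline of the standard proof in the cited reference (and in \cite[\S5.1]{Cox2011}, which you also invoke), so there is nothing to compare against and no gap to flag.
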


Note that this is a \emph{geometric quotient}. That is, points in $X_{\Sigma}$  are in one-to-one correspondence with closed $G_{\Sigma}$ orbits in $\mathbb{C}^{\Sigma(1)}- Z^{\text{Cox}}(\Sigma)$.

\subsubsection{GIT for Toric Varieties}
\label{sec:GIT}
Recall that for any group $G$, a \emph{character} is a map $\chi: G \to \mathbb{C}^{\times}$. The set of all characters naturally forms a group $\text{Ch}(G) := \text{Hom}_{\mathbb{Z}}\left(G,\mathbb{C}^{\times}\right)$. We may associate to any character $\chi$ of $ G_{\Sigma}$ an \emph{unstable locus}  $Z(\chi)\subset \mathbb{C}^{\Sigma(1)}$ and a \emph{GIT quotient} $\displaystyle \left(\mathbb{C}^{\Sigma(1)}-Z(\chi)\right)/\! \!/ G_{\Sigma}$, which is an algebraic variety (for a precise definition of $Z(\chi)$ see \cite{Cox2011}). \\

 By construction $\text{Ch}\left(G_{\Sigma}\right) = \text{Cl}(X_{\Sigma})$. Furthermore, the map $\text{div}$ in \eqref{equation:SESCox} is surjective, thus we may write any $[D] \in \text{Cl}(X_{\Sigma})$ as $[D] = \text{div}(\bfa)$ for some $\bfa \in \mathbb{Z}^{\Sigma(1)}$. Following Cox \emph{et al} in \cite{Cox2011}, we denote the character associated to the divisor class $\text{div}(\bfa)$ as $\chi_{\text{div}(\bfa)}$. Varying $\text{div}(\bfa)$ leads to different characters and hence different quotients. However, we recover $X_{\Sigma}$ 
 for a generic ample divisor because of the following theorem,  which collects results Theorem 5.1.11, Prop. 14.1.9, 14.1.12 and Example 14.2.14 in \cite{Cox2011}.
\begin{theorem}[\cite{Cox2011}]\label{thm:CoxGIT}
Let $X_{\Sigma}$ be a toric variety without torus factors. If $\text{div}(\bfa)$ is ample and $\chi_{\text{div}(\bfa)}$ is generic,
 then $Z(\chi_{\text{div}(\bfa)}) = Z^{\text{Cox}}(\Sigma)$ and 
\begin{equation*}
X_{\Sigma} \cong
\left(\mathbb{C}^{\Sigma(1)}-Z(\chi_{\text{div}(\bfa)}) \right)/ \! \! / G_{\Sigma} 
\end{equation*}
\label{theorem:CoxQuotient}
\end{theorem}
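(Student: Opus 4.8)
The plan is to derive both assertions from the explicit description of toric GIT quotients of $\mathbb{C}^{\Sigma(1)}$ developed in \cite[Ch.~14]{Cox2011}. Write $\chi := \chi_{\text{div}(\bfa)}$ and let $D$ be a torus-invariant divisor with $[D] = \text{div}(\bfa)$. By construction $Z(\chi)$ is the common zero locus in $\mathbb{C}^{\Sigma(1)}$ of all $G_{\Sigma}$-semi-invariants of weight $n\,\text{div}(\bfa)$ for $n>0$, and $\bigl(\mathbb{C}^{\Sigma(1)} - Z(\chi)\bigr)/\!\!/G_{\Sigma}$ is the $\Proj$ of the graded ring these semi-invariants form. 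Since $G_{\Sigma}$ is diagonalizable, each graded piece is spanned by monomials, so everything reduces to understanding which monomials are semi-invariant of the relevant weights.

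A monomial $x^{\bfb} = \prod_{\rho} x_{\rho}^{b_{\rho}}$ with $\bfb \in \mathbb{Z}^{\Sigma(1)}_{\geq 0}$ has $G_{\Sigma}$-weight $\text{div}(\bfb) \in \text{Cl}(X_{\Sigma})$. Hence $x^{\bfb}$ is a semi-invariant of weight $n\,\text{div}(\bfa)$ precisely when $\text{div}(\bfb - n\bfa) = 0$ and $\bfb \geq 0$; by exactness of \eqref{equation:SESCox} the first condition means $b_{\rho} = n a_{\rho} + \langle m, u_{\rho}\rangle$ for some $m \in M$, where $u_{\rho}$ denotes the minimal generator of the ray $\rho$. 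Imposing $\bfb \geq 0$ then identifies such monomials with the lattice points of $nP$, where $P := \{\, m \in M\otimes\mathbb{R} : \langle m, u_{\rho}\rangle \geq -a_{\rho} \ \text{for all } \rho \,\}$ is the polyhedron of $D$. The hypothesis that $\text{div}(\bfa)$ is ample is exactly the statement that $P$ is a full-dimensional lattice polytope whose normal fan equals $\Sigma$.

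Next I would compute $Z(\chi) = \bigcap_{\bfb} V(x^{\bfb})$, the intersection taken over all admissible $\bfb$. If $m$ lies in the relative interior of the face $F$ of $P$ that corresponds, under $\Sigma = \mathcal{N}(P)$, to the cone $\tau \in \Sigma$, then $b_{\rho} = 0$ iff $m$ lies on the facet of $P$ with inner normal $u_{\rho}$, i.e.\ iff $\rho$ is a ray of $\tau$; thus $V(x^{\bfb}) = \bigcup_{\rho \notin \tau}V(x_{\rho})$. Every cone $\tau$ lies in some maximal cone $\sigma$, so $V(x^{\bfb}) \supseteq \bigcup_{\rho \notin \sigma}V(x_{\rho}) \supseteq \bigcap_{\sigma' \text{ maximal}}\bigcup_{\rho\notin \sigma'}V(x_{\rho})$ for every admissible $\bfb$, giving $Z(\chi) \supseteq Z^{\text{Cox}}(\Sigma)$. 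Conversely, letting $m$ range over the vertices of $P$, equivalently $\tau$ over the maximal cones, realizes each term $\bigcup_{\rho \notin \sigma}V(x_{\rho})$, so $Z(\chi) \subseteq \bigcap_{\sigma \text{ maximal}} V\!\bigl(\prod_{\rho \notin \sigma}x_{\rho}\bigr) = Z^{\text{Cox}}(\Sigma)$. This proves the first claim.

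For the quotient, the monomial basis of the $n$-th graded piece of the semi-invariant ring is indexed by $nP \cap M$, so that ring is isomorphic as a graded ring to the section ring $\bigoplus_{n\geq 0} H^{0}\bigl(X_{\Sigma}, \mathcal{O}_{X_{\Sigma}}(nD)\bigr)$; since $D$ is ample, $\Proj$ of this ring is $X_{\Sigma}$, and by definition it is also $\bigl(\mathbb{C}^{\Sigma(1)} - Z(\chi)\bigr)/\!\!/G_{\Sigma}$, which is the second claim. The genericity hypothesis enters by placing $\chi$ in the interior of a maximal chamber of the secondary (GKZ) fan on $\text{Cl}(X_{\Sigma})\otimes\mathbb{R}$: this is what guarantees the GIT quotient is an (almost) geometric quotient and, in our situation, forces $\Sigma$ to be simplicial, so one may alternatively combine the geometric-quotient theorem of Cox quoted above with the identity $Z(\chi) = Z^{\text{Cox}}(\Sigma)$. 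I expect the only genuine work to lie in the third paragraph: matching the support of a lattice-point monomial with the complement of a cone of $\Sigma$, and verifying that the vanishing locus of the full (infinite) family of semi-invariant monomials already coincides with that of the finitely many vertex monomials.
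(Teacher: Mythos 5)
The paper does not actually prove this statement---it is imported verbatim from Cox--Little--Schenck (Theorem 5.1.11, Propositions 14.1.9 and 14.1.12, Example 14.2.14 of \cite{Cox2011})---so the only fair comparison is with the argument in that source, and your proposal is essentially a correct reconstruction of it: semi-invariant monomials of weight $n\,\text{div}(\bfa)$ correspond to lattice points of $nP$ via the exact sequence \eqref{equation:SESCox}, the support of the monomial attached to a lattice point is the complement of the rays of the normal cone of the face containing it, the vertex monomials cut out exactly $Z^{\text{Cox}}(\Sigma)$, and the GIT quotient is $\operatorname{Proj}$ of the section ring of the ample divisor, hence $X_{\Sigma}$. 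Three small points are worth recording. First, your argument never uses genericity of $\chi_{\text{div}(\bfa)}$: both displayed conclusions hold for any ample class, and genericity is only needed for the stronger statements the paper extracts later (absence of strictly semistable points, so that the quotient is geometric and, combined with ampleness, $\Sigma$ is simplicial); your closing remark has this essentially right, so the unused hypothesis is not an error. Second, the step ``$\text{div}(\bfa)$ ample $\Rightarrow$ $P$ is full-dimensional with normal fan $\Sigma$'' presupposes that $\Sigma$ is complete (equivalently that $X_{\Sigma}$ is projective); this is implicit in the paper's setting, where $\Sigma$ is always the normal fan of a flow polytope, but should be stated if the theorem is quoted with only ``no torus factors'' as a hypothesis. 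Third, if the ample class is merely $\mathbb{Q}$-Cartier the vertices of $P$ need not be lattice points, so the ``conversely'' step should take $n$ sufficiently divisible that $nv\in M$ for each vertex $v$ (and the final $\operatorname{Proj}$ identification should pass to a Veronese subring); your formulation with varying $n$ accommodates this, but it deserves an explicit sentence. With these caveats the proposal is sound and supplies the proof that the paper delegates to \cite{Cox2011}.
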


\subsection{Quivers}

A quiver is a finite, directed graph, possibly with loops and multi edges. We shall denote the vertices by $Q_0$, and the arrows by $Q_1$. For any arrow $a\in Q_1$, $a^{+}\in Q_0$ will denote the head and $a^{-}\in Q_0$ will denote the tail.

\begin{definition}
An integral weight of $Q$ is a function $\theta: Q_0 \to \mathbb{Z}$ such that $\displaystyle \sum_{i\in Q_0} \theta(i) = 0$. The set of all integral weights forms a lattice $\text{Wt}(Q)\subset \mathbb{Z}^{Q_0}$. 
\end{definition}

\begin{definition}\label{def:circ}
An integral circulation is a function $f: Q_1 \to \mathbb{Z}$ such that:
\begin{equation*}
\sum_{a\in Q_1 \atop a^{-} = i}f(a) = \sum_{a\in Q_1 \atop a^{+} = i} f(a) \quad \quad \text{ for every } i\in Q_0
\end{equation*}
The set of all integral circulations forms a lattice $\text{Cir}(Q) \subset \mathbb{Z}^{Q_1}$.
\end{definition}
Frequently, we identify $f:Q_1 \to \mathbb Z$ with a vector $f$ in $\mathbb Z^{Q_1}$, so $f_i=f(i)$.
Next, we define, as in \cite{Craw2008b}, the \emph{incidence map}: $\text{inc}: \mathbb{Z}^{Q_1} \to \text{Wt}(Q)$ by 
\begin{equation*}
(\text{inc}(f))_{i} = \sum_{a\in Q_1 \atop a^{+} = i} f(a)  - \sum_{a\in Q_1 \atop a^{-} = i} f(a) \quad \text{ for all } i \in Q_0
\end{equation*}
If $Q$ is connected we have the following short exact sequence:
\begin{equation}
\label{equation:SESInc}
0 \to \text{Cir}(Q) \to \mathbb{Z}^{Q_1} \xrightarrow{\text{inc}} \text{Wt}(Q) \to 0
\end{equation}
Applying $\text{Hom}_{\mathbb{Z}}(\cdot, \mathbb{C}^{\times})$ we obtain:
\begin{equation}
1 \to \text{Hom}_{\mathbb{Z}}(\text{Wt}(Q), \mathbb{C}^{\times}) \to \left(\mathbb{C}^{\times}\right)^{Q_1} \to \text{Hom}_{\mathbb{Z}}(\text{Cir}(Q), \mathbb{C}^{\times}) \to 1 
\end{equation}
For future use we shall denote $G_{Q} := \text{Hom}_{\mathbb{Z}}(\text{Wt}(Q), \mathbb{C}^{\times}) $. Note that the above sequence gives a natural action of $G_{Q}$ on $\mathbb{C}^{Q_1}$, via its embedding in $\left(\mathbb{C}^{\times}\right)^{Q_1}$. We define the \emph{canonical weight} of $Q$ as 
\begin{equation}
\delta_{Q}(i) :=  \sum_{a\in Q_1 \atop a^{+} = i} 1  - \sum_{a\in Q_1 \atop a^{-} = i} 1 \quad \text{ for all } i \in Q_0
\end{equation}\label{eq:can}
clearly $\delta_{Q} = \text{inc}(\mathbf{1})$, where $\mathbf{1}\in\mathbb{Z}^{Q_1}$ is the all-ones vector.

\subsubsection{Representations of Quivers}
\label{sec:RepsOfQuivers}
A sincere-thin representation $R$ of $Q$ assigns a vector space $R(i) \cong \mathbb C$ to each vertex $i\in Q_0$ and a linear map $R(a): R(a^{-}) \to R(a^{+})$ to each arrow. 
We can also think of such representations as assigning to the set of edges $Q_{1}$ a vector
$\vec w_R= (w_1, \ldots, w_{|Q_1|})\in \mathbb{C}^{Q_1}$  such that the linear map $R(a)$ is defined as 
$ R(a)(x) = w_{a}x$. We will only work with sincere-thin representation, so we just call them representations. \\

Let $\text{Rep}(Q)$ be the space of all representations of $Q$, so $\text{Rep}(Q) \cong \mathbb{C}^{Q_1}$. A morphism $L$ of representations $R$ and $R'$ is a set of linear maps $L(i): R(i) \to R'(i)$ for $i\in Q_0$ 
such that the following diagram commutes  for all $a\in Q_1$.
\begin{displaymath}
    \xymatrix{
        R(a^-) \ar[rr]^{L(a^-)} \ar[d]_{R(a)} & & R'(a^-) \ar[d]^{R'(a)} \\
        R(a^+) \ar[rr]_{L(a^+)}                   & & R'(a^+) }
\end{displaymath}
If $w_{R}=(w_1, \ldots, w_{|Q_1|})$ and $w_{R'}=(w'_1, \ldots, w'_{|Q_1|})$, then we can  interpret  $L: R \to R'$ as a vector $\vec v_{L} =(v_1, \ldots, v_{|Q_0|}) \in \mathbb{C}^{Q_0}$ such that $L(i)$ is defined 
as
$L(i)(x)= v_ix$ where $w'_{a}v_{a^{-}} = v_{a^+}w_a$ for every arrow $a \in Q_1$. In particular, for any $g \in \left(\mathbb{C}^{\times}\right)^{Q_0}$, we have an action on $\text{Rep}(Q)$ given by $$
(g\cdot R)(a) := g(a^{+})R(a)g^{-1}(a^{-}).
$$
Equivalently, if we denote $g$ as $(t_1, \ldots, t_{|Q_0|})$, then 
the action is defined as $w_{a} \to t_{a^+} w_a  t_{a^-}^{-1}$. 
The orbits of the action correspond to isomorphism classes of representations. This group does not act faithfully---any element of the form $(\lambda,\lambda,\ldots, \lambda)$ will act trivially. $G_{Q}$ however does act faithfully (see, for example, \S 4.4 of \cite{Craw2008}). 

\begin{definition}
A subrepresentation $R^{'}$ of $R$ is a representation of $Q$ such that $R^{'}(i)\subset R(i)$ for all $i\in Q_0$ and $R^{'}(a) = R(a)|_{R^{'}(a^{-})}$ for all $a\in Q_1$.
\end{definition}

\subsubsection{The Moduli Space of Representations of a Quiver}\label{sec:ModuliSpace_Reps}
As noted by King \cite{King1994}, any $\theta : Q^{0} \to \mathbb{Z}$ defines a character of $(\mathbb{C}^{\times})^{Q_0}$ via:
\begin{equation*}
\chi_{\theta}(g) = \prod_{i\in Q_0}g(i)^{\theta(i)}
\end{equation*}
If $\theta$ satisfies the further condition $\sum_{i}\theta(i) = 0$, {\em i.e.} $\theta\in \text{Wt}(Q)$, then it defines a character $\chi_{\theta}$ of $G_{Q}$. Analogously to \ref{sec:GIT}, and following King \cite{King1994} and Hille \cite{Hille1998}, we may define an unstable locus $Z(\chi_{\theta})$ and a GIT quotient $\displaystyle \mathcal M(Q,\theta) := \mathbb C^{Q_1}/ \! \! /_{\chi_{\theta}} G_{Q}$ which we call the {\em quiver moduli space}. One can easily extend this to non-integral weights, {\em i.e.} $\theta \in \text{Wt}(Q)\otimes\mathbb{R}$ \cite{Hille1998}. There is a combinatorial characterization of $\chi_{\theta}$-stability in terms of $\theta$, which we present as Proposition \ref{Prop:Stability}. First:

\begin{definition}
Consider a subquiver $Q^{'}\subset Q$ with $Q^{'}_{0} = Q_{0}$. We say a subset of vertices $V\subset Q_{0}$ is $Q^{'}$-successor closed if there is no arrow in $Q^{'}_{1}$ leaving $V$. That is, for all $a\in Q^{'}_{1}$ with $a^{-}\in V$, we also have $a^{+}\in V$ (see Figure \ref{fig:SucClosed} for an example).
\end{definition}

\begin{figure}
\centering
\includegraphics[width = 0.5\textwidth]{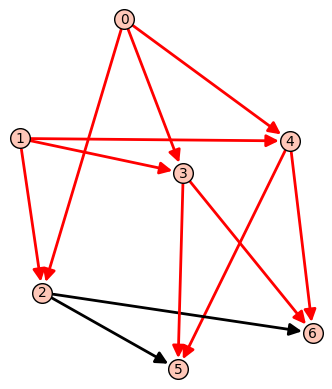}
\label{fig:SucClosed}
\caption{In the quiver illustrated above, let $Q_1$ be all arrows (black and red), while let $Q_1^{'}$ denote the red arrows only. Then $\{2,5,6\}$ is $Q^{'}$-successor closed, as there are no red arrows leaving this set.}
\end{figure}

Henceforth, all subquivers $Q^{'}\subset Q$ will be assumed to have $Q^{'}_{0} = Q_0$, and we shall not explicitly mention this condition.

\begin{definition}
A subquiver $Q^{'}\subset Q$  is $\theta$-stable (resp. $\theta$-semi-stable) if, for all $Q^{'}$-successor closed subsets $V\subset Q_0$, we have that:
\begin{equation*}
\sum_{i\in V} \theta(i) > 0 \;\; \left( \text{resp.}  \geq 0 \right)
\end{equation*}
  
\end{definition}
We say a quiver is {\em unstable} if it is not semi-stable. Finally:

\begin{definition}
The support quiver $\text{Supp}(R)$ of a representation $R$ is the quiver with arrows $a \in Q_1$ such that  $R(a) \neq 0$ or equivalently $w_{a} \neq 0$.
\end{definition}

\begin{proposition}\cite[Lemma 1.4]{Hille1998}
$$
Z(\chi_{\theta}) = \left\{ R\in \text{Rep}(Q_1): \ \text{supp}(R) \text{ is a $\theta$-unstable quiver} \right\}
$$
\label{Prop:Stability}
\end{proposition}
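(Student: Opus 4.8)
The plan is to prove the statement by running the Hilbert--Mumford numerical criterion for the action of $G_Q$ on $\mathbb{C}^{Q_1}$ linearized by the character $\chi_\theta$, and then translating the resulting one-parameter-subgroup condition into the combinatorial notion of a $\theta$-unstable quiver. The first step would be to describe the cocharacters of $G_Q = \mathrm{Hom}_{\mathbb Z}(\mathrm{Wt}(Q),\mathbb C^\times)$: they are represented by integer vectors $\lambda = (\lambda_i)_{i\in Q_0}$ taken modulo the all-ones vector $\bfone$ (the cocharacter lattice dual to $\mathrm{Wt}(Q)\subset\mathbb Z^{Q_0}$), and such a $\lambda$ acts on a representation with weight vector $\vec w_R = (w_a)$ by $w_a\mapsto t^{\lambda_{a^+}-\lambda_{a^-}}w_a$. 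Consequently $\lim_{t\to 0}\lambda(t)\cdot R$ exists in $\mathbb C^{Q_1}$ precisely when $\lambda_{a^+}\geq\lambda_{a^-}$ for every $a\in\supp(R)$ --- that is, when $\lambda$ is non-decreasing along the arrows of $\supp(R)$ --- and for such $\lambda$ the Hilbert--Mumford weight equals $\langle\chi_\theta,\lambda\rangle = \sum_{i\in Q_0}\theta(i)\lambda_i$, which is well defined modulo $\bfone$ because $\sum_i\theta(i)=0$. The criterion then reads: $R\in Z(\chi_\theta)$ if and only if there exists such a $\lambda$ with $\langle\chi_\theta,\lambda\rangle<0$.

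Granting that reduction, both implications are short. To see that $\supp(R)$ being $\theta$-unstable forces $R\in Z(\chi_\theta)$, I would unpack the definition: there is a $\supp(R)$-successor closed $V\subseteq Q_0$ with $\sum_{i\in V}\theta(i)<0$, and taking $\lambda = \mathbf{1}_V$ works, since successor-closedness of $V$ is exactly the statement that $\mathbf{1}_V$ is non-decreasing along $\supp(R)$, while $\langle\chi_\theta,\mathbf{1}_V\rangle = \sum_{i\in V}\theta(i)<0$. For the converse, starting from a destabilizing $\lambda$, I would first add a large multiple of $\bfone$ --- which changes neither limit-existence nor $\langle\chi_\theta,\lambda\rangle$ --- to assume $\lambda\geq 0$, then decompose by superlevel sets, $\lambda = \sum_{c\geq 1}\mathbf{1}_{V_c}$ with $V_c = \{\,i : \lambda_i\geq c\,\}$. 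Each $V_c$ is $\supp(R)$-successor closed (if $a\in\supp(R)$ and $a^-\in V_c$ then $\lambda_{a^+}\geq\lambda_{a^-}\geq c$), and $\sum_{c\geq 1}\sum_{i\in V_c}\theta(i) = \langle\chi_\theta,\lambda\rangle<0$ forces $\sum_{i\in V_c}\theta(i)<0$ for some $c$, exhibiting $\supp(R)$ as $\theta$-unstable. As a byproduct this shows, as it must, that membership in $Z(\chi_\theta)$ depends on $R$ only through $\supp(R)$, so $Z(\chi_\theta)$ is a union of coordinate subspaces. A more economical route, if one is willing to quote King, is to invoke King's numerical criterion for $\chi_\theta$-semistability of a quiver representation together with the bijection between subrepresentations $R'\subseteq R$ of a thin-sincere $R$ and $\supp(R)$-successor closed subsets $V = \{\,i : R'(i)\neq 0\,\}$ --- under which $\theta(R') = \sum_{i\in V}\theta(i)$ --- after which the proposition is pure definition-chasing.

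The main obstacle is not a single hard step but the bookkeeping of sign conventions: one must line up the GIT linearization by $\chi_\theta$, the direction of the Hilbert--Mumford inequality, and this paper's convention that $\theta$-(semi)stability of a quiver refers to successor-closed sets having (strictly) positive total $\theta$-weight, so that an unstable quiver genuinely corresponds to the existence of a cocharacter of strictly negative weight. A single test case --- the quiver $\bullet\to\bullet$ with $\theta = (-1,1)$, where every nonzero representation should be semistable and the origin unstable --- pins down all the signs. The only other place that needs a word of care is the superlevel-set decomposition of an arbitrary cocharacter used in the converse direction; once one reduces to $\lambda\geq 0$ this is just the identity $\lambda_i = \sum_{c\geq 1}\mathbf{1}[\lambda_i\geq c]$ combined with the remark that each superlevel set is successor closed.
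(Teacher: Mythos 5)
Your argument is correct. Note that the paper itself offers no proof of this proposition---it is quoted directly from Lemma 1.4 of Hille (which in turn rests on King's results)---so what you have written is a self-contained reconstruction of the argument behind that citation rather than a genuinely different route. Concretely: your identification of the cocharacter lattice of $G_Q$ with $\mathbb{Z}^{Q_0}/\mathbb{Z}\bfone$ and of the $\lambda$-action on $w_a$ as multiplication by $t^{\lambda_{a^+}-\lambda_{a^-}}$ matches the paper's conventions ($w_a \mapsto t_{a^+}w_a t_{a^-}^{-1}$, $\chi_\theta(g)=\prod_i g(i)^{\theta(i)}$), and with King's numerical criterion (semistable iff $\langle\chi_\theta,\lambda\rangle\ge 0$ for every one-parameter subgroup whose limit exists) the signs come out as you expect; your $\bullet\to\bullet$ test case with $\theta=(-1,1)$ is the right sanity check. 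Both implications are sound: $\lambda=\mathbf{1}_V$ destabilizes when $V$ is $\supp(R)$-successor closed with $\sum_{i\in V}\theta(i)<0$ (and $V\neq\emptyset$ automatically), while your superlevel-set decomposition $\lambda=\sum_{c\ge 1}\mathbf{1}_{V_c}$, after translating by a multiple of $\bfone$, is exactly the thin-sincere shadow of the filtration argument in King's Proposition 3.1; since $\sum_i\theta(i)=0$, the offending $V_c$ is automatically a proper nonempty subset, as the definition of an unstable subquiver requires. The only substantive input you quote without proof is the affine Hilbert--Mumford criterion for character-linearized GIT (King, Prop.\ 2.5), which is the same input the cited source uses; and your ``economical route'' (King's Prop.\ 3.1 plus the bijection between subrepresentations of a thin-sincere $R$ and $\supp(R)$-successor closed vertex sets, under which $\theta(R')=\sum_{i\in V}\theta(i)$) is in fact precisely how Hille's Lemma 1.4 is obtained, so either version of your write-up would serve as a proof of the statement.
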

We shall call a weight  $\theta$ \emph{generic} if every $\theta$-semistable quiver is actually $\theta$-stable. Note that $\theta$ is generic if and only if $\chi_{\theta}$ is generic in the GIT sense. We highlight that for a fixed $\theta$, $Z(\chi_{\theta})$ is a closed subvariety.  Finally, the following concept will be important later on: 

\begin{definition}
\label{def:Tightness}
We say that $(Q,\theta)$ is tight if for all $i=1,\ldots, |Q_1|$ the subquivers $Q^{'}\subset Q$ with $Q^{'}_{1} = Q_1\setminus\{a_i\}$ are $\theta$-stable.
\end{definition}

\section{Main results}
\label{sec:MainResults}
 \subsection{The fan and polytope associated to $Q$}
 \label{sec:poly}
 Let us define the flow polytope of $Q$.
 \begin{definition}
For $\theta\in \text{Wt}(Q)$, the flow polyhedron $P(Q,\theta) \subset \text{Cir}(Q)\otimes \mathbb{R}\!\subset\!\mathbb{R}^{Q_1}$ is:
 \begin{align*}
P(Q,\theta) &:= \text{inc}^{-1}(\theta) \cap \mathbb{R}^{Q_1}_{\geq 0}\\
 &= \left\{ \bfx\in \mathbb{R}^{Q_1} \ : \ \bfx \geq \mathbf{0} \text{ and, for all $i\in Q_1$, } \theta(i) = \sum_{a\in Q_1 \atop a^{+} = i} x_a - \sum_{a\in Q_1 \atop a^{-} = i}x_a \right\}.
\end{align*}
Its normal fan shall be denoted as $\Sigma(Q,\theta)$ or $\Sigma_{\theta}$ if $Q$ is clear.
\end{definition}
When $Q$ is acyclic, more can be said:

\begin{theorem}
\label{thm:PQT}
Suppose that $Q$ is acyclic (that is, $Q$ contains no directed cycles). Then:
\begin{enumerate}
\item $P(Q,\theta)$ is a polytope, integral with respect to $ \text{Cir}(Q) \cong \text{inc}^{-1}(0)\cap \mathbb{Z}^{Q_1}$.
\item If $(Q,\theta)$ is tight, then $\text{dim}(P(Q,\theta)) =  |Q_1| - |Q_0| + 1$
\item If $(Q,\theta)$ is tight, then the facets (i.e. maximal faces) of $P(Q,\theta)$ are in on-to-one correspondence with $Q_1$.
\item The projective toric variety associated to $P(Q,\theta)$ is the moduli space $\mathcal{M}(Q,\theta)$. Moreover, if $\theta$ is generic, then $\mathcal{M}(Q,\theta)$ is smooth.
\end{enumerate}
\end{theorem}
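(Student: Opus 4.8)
The plan is to prove the four items essentially independently: (1)--(3) are purely polytopal and reduce to two elementary facts about feasibility of network flows, while (4) is an application of the standard dictionary between quiver GIT quotients and toric geometry (\cite{King1994}, \cite{Hille1998}, \cite{Craw2008b}). For (1), I would first settle boundedness via the recession cone. Since $P(Q,\theta) = \mathrm{inc}^{-1}(\theta)\cap\mathbb{R}^{Q_1}_{\ge 0}$ and $\mathrm{inc}^{-1}(\theta)$ is an affine translate of $\mathrm{inc}^{-1}(0) = \mathrm{Cir}(Q)\otimes\mathbb{R}$, the recession cone of $P(Q,\theta)$ is the set of nonnegative real circulations. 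A nonzero such circulation $f$ has nonempty support, and the balance conditions of Definition \ref{def:circ} force that, inside $\mathrm{Supp}(f)$, any vertex incident to an outgoing arrow is incident to an incoming one; walking forward from any arrow of $\mathrm{Supp}(f)$ in a finite graph then produces a directed cycle, contradicting acyclicity. Hence the recession cone is $\{0\}$ and $P(Q,\theta)$ is a polytope. Integrality with respect to $\mathrm{Cir}(Q)$ follows because the vertex--arrow incidence matrix of a directed graph is totally unimodular, so for integral $\theta$ every vertex of $\{x:\mathrm{inc}(x)=\theta,\ x\ge 0\}$ lies in $\mathbb{Z}^{Q_1}$; as all vertices have the same image $\theta$, their pairwise differences lie in $\mathrm{inc}^{-1}(0)\cap\mathbb{Z}^{Q_1}=\mathrm{Cir}(Q)$.

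The engine for (2) and (3) is the following pair of facts, for a connected acyclic quiver $Q'$ and $\theta\in\mathrm{Wt}(Q')$: (a) $P(Q',\theta)\ne\emptyset$ iff $Q'$ is $\theta$-semistable; (b) if $Q'$ is $\theta$-stable then $P(Q',\theta)$ is full-dimensional in $\mathrm{inc}^{-1}(\theta)$, i.e. admits a flow $x>0$ with $\mathrm{inc}(x)=\theta$. Statement (a) is Hoffman's circulation theorem: summing $\mathrm{inc}(x)_i=\theta(i)$ over a successor-closed set $V$ makes interior arrows cancel and leaves $\sum_{i\in V}\theta(i)=\sum_{a^-\notin V,\,a^+\in V}x_a\ge 0$ (necessity), while sufficiency is the max-flow--min-cut direction. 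For (b) I would take $x_0\in P(Q',\theta)$ whose support subquiver is maximal among feasible flows and use (a) together with the strict stability inequalities (for proper nonempty successor-closed sets) to conclude $\mathrm{Supp}(x_0)=Q'_1$; this is also recorded in \cite{Hille1998}. Granting (a),(b): tightness forces $Q$ itself to be $\theta$-stable, since every $Q$-successor-closed $V$ is $(Q\setminus a_i)$-successor-closed and hence satisfies the strict inequality; in particular $Q$ is connected (a proper connected component would be successor-closed, but the components' positive $\theta$-sums cannot add to $0$). Then (b) gives $P(Q,\theta)$ full-dimensional in $\mathrm{inc}^{-1}(\theta)$, whose dimension is $|Q_1|-|Q_0|+1$ by \eqref{equation:SESInc}, proving (2). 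For (3), the facets of $P(Q,\theta)$ are among the faces $F_a:=P(Q,\theta)\cap\{x_a=0\}=P(Q\setminus a,\theta)$, these being the only defining inequalities; tightness makes each $Q\setminus a_i$ $\theta$-stable, hence connected, hence nonempty by (a) and full-dimensional in $\mathrm{inc}^{-1}_{Q\setminus a_i}(\theta)$ by (b), a space of dimension $|Q_1|-1-|Q_0|+1=\dim P(Q,\theta)-1$; so each $F_{a_i}$ is a facet, and they are pairwise distinct because a strictly positive flow on $Q\setminus a_i$ uses $a_j$, so $x_{a_j}$ is not identically $0$ on $F_{a_i}$. This yields the bijection $Q_1\leftrightarrow$ facets.

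For (4), $\mathcal M(Q,\theta)=\mathbb{C}^{Q_1}/\!\!/_{\chi_\theta}G_Q$ is by construction $\mathrm{Proj}$ of $\bigoplus_{n\ge 0}R_n$, where $R_n$ is the space of polynomials in the $x_a$ on which $G_Q$ acts through $\chi_\theta^{\,n}$. A monomial $\prod_a x_a^{m_a}$ with $m\ge 0$ is $\chi_\theta^{\,n}$-semi-invariant exactly when $\mathrm{inc}(m)=n\theta$, i.e. when $m$ is a lattice point of $n\cdot P(Q,\theta)$; hence $R_n$ has a basis indexed by $nP(Q,\theta)\cap\mathbb{Z}^{Q_1}$ and $\bigoplus_n R_n$ is the (normal, by acyclicity) lattice-point ring of $P(Q,\theta)$, so $\mathcal M(Q,\theta)$ is the projective toric variety with fan $\Sigma(Q,\theta)$ (this is \cite{King1994}, \cite{Hille1998}, \cite{Craw2008b}). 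If $\theta$ is generic then, as noted after Proposition \ref{Prop:Stability}, $\chi_\theta$ is GIT-generic, $\theta$-semistability coincides with $\theta$-stability, the quotient is geometric, and $G_Q$ acts with trivial stabilizers on $\mathbb{C}^{Q_1}\setminus Z(\chi_\theta)$ (a $\theta$-stable thin-sincere representation is simple, so its automorphism group is only the scalars, which is precisely the subgroup of $(\mathbb{C}^\times)^{Q_0}$ acting trivially). A free action of the torus $G_Q$ with geometric quotient on the smooth variety $\mathbb{C}^{Q_1}\setminus Z(\chi_\theta)$ has smooth quotient, so $\mathcal M(Q,\theta)$ is smooth (equivalently, $\Sigma(Q,\theta)$ is a smooth fan).

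I expect the main obstacle to be item (b) of the flow-feasibility lemma — that a $\theta$-stable connected acyclic quiver carries a strictly positive flow, equivalently that $P(Q,\theta)$ is full-dimensional — and the bookkeeping ensuring that each $Q\setminus a_i$ stays connected so the facet dimension count is exact; the graded-ring identification in (4) is conceptually the deepest ingredient but is well documented in the cited references.
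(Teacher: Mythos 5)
Your proposal is correct in substance, but it takes a genuinely different route from the paper: the paper offers no argument for Theorem \ref{thm:PQT} at all, simply citing Remarks 3.3 and 2.20 and Corollaries 2.14 and 2.19 of \cite{Joo2015} (which in turn rest on \cite{Hille1998}), whereas you reconstruct the statements from scratch. Your treatment of (1)--(3) --- recession cone of nonnegative circulations killed by acyclicity, total unimodularity of the incidence matrix for integrality, and the dictionary ``feasibility $\Leftrightarrow$ semistability'', ``stability $\Rightarrow$ strictly positive flow $\Rightarrow$ full dimension'', with tightness forcing $Q$ and each $Q\setminus\{a_i\}$ to be stable and connected so that the faces $\{x_{a_i}=0\}$ are distinct facets --- is exactly the content of the cited results, so what you buy is self-containedness at the price of justifying two standard but nontrivial inputs: Hoffman's circulation theorem for the feasibility direction, and your fact (b) that a $\theta$-stable connected acyclic quiver admits an everywhere-positive flow. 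The ``maximal support'' sketch for (b) is your weakest link; a cleaner argument applies the feasibility criterion to the weight $\theta-\epsilon\,\delta_{Q}$ for small $\epsilon>0$, which is semistable because the finitely many strict stability inequalities have uniform slack, and any flow for it plus $\epsilon\mathbf{1}$ is a positive flow for $\theta$. For (4), your identification of the $\chi_{\theta}^{\,n}$-semi-invariant monomials with the lattice points of $nP(Q,\theta)$ (watch the sign convention relating the $G_{Q}$-action on coordinates to $\mathrm{inc}$) and the trivial-stabilizer/Luna-slice argument for smoothness at generic $\theta$ is the standard proof recorded in \cite{King1994}, \cite{Hille1998}, \cite{Craw2008b}. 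In short: no gap of substance; the paper buys brevity by deferring to the literature, while your route yields a self-contained proof provided you actually prove, rather than assert, Hoffman's theorem, the strictly-positive-flow lemma, and the free-action smoothness step.
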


\begin{proof}
See remarks 3.3 and 2.20, and Corollaries 2.14 and 2.19 in \cite{Joo2015} and references therein.
\end{proof}
For the rest of this sub-section, we focus on the canonical weight.  We start by  describing a basis for the $M$-lattice, $\text{Cir}(Q)$, given in Definition \ref{def:circ}, with a view towards explicitly describing the vertex presentation of $P^{\Delta}(Q,\delta_{Q})$. Let $T = (Q_0, T_1)$ be a spanning tree of $Q$. Note that $|Q_{1}\setminus T_1| = |Q_1|-|Q_0| + 1 := d$, so enumerate these arrows as $Q_{1}\setminus T_1 = \{b_1,\ldots, b_d\}$. For each $b_i$, let $c^{i}_{T}$ denote the unique undirected primitive cycle in $T\cup \{b_i\}$.  Define a circulation $f_{b_i} \in \text{Cir}(Q)$ as follows:
\begin{equation}
f_{b_i}(a) = \left\{\begin{array}{cc} 1 & \text{ if } a \text{ is forwardly traversed by } c^{i}_{T} \\ 
							-1 & \text{ if } a \text{ is traversed in the reverse direction by } c^{i}_{T} \\
														  0 & \text{ otherwise} \end{array}\right.
\label{eq:f_definition}
\end{equation}

\begin{lemma}
If $Q$ is a connected quiver, the set $\{f_{b_1},\ldots, f_{b_d}\}$ forms a basis for $\text{Cir}(Q)$, thought of as a sub-lattice of $\mathbb{Z}^{Q_1}$.
\end{lemma}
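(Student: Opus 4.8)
The plan is to show directly from the definitions that the $f_{b_i}$ lie in $\text{Cir}(Q)$, are $\mathbb{Z}$-linearly independent, and $\mathbb{Z}$-span $\text{Cir}(Q)$; the last two statements together are precisely the claim that they form a lattice basis. As a sanity check, the short exact sequence \eqref{equation:SESInc} for the connected quiver $Q$ already forces $\mathrm{rank}\,\text{Cir}(Q) = |Q_1| - |Q_0| + 1 = d$, so we are producing the correct number of vectors. That each $f_{b_i}$ is an integral circulation is immediate: $f_{b_i}$ is the signed indicator of the undirected cycle $c^{i}_{T}$ with a fixed choice of traversal direction, and walking once around a cycle enters and leaves every vertex it meets the same net number of times, so the conservation identity of Definition \ref{def:circ} holds at each $i\in Q_0$; integrality is clear.

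For independence I would use the coordinates indexed by the non-tree arrows $b_1,\dots,b_d$. Since $T\cup\{b_i\}$ has a unique undirected cycle, namely $c^{i}_{T}$, and that cycle must contain $b_i$ (otherwise it would be a cycle inside the tree $T$), the cycle $c^{i}_{T}$ consists of $b_i$ together with arrows of $T_1$ only. Hence $f_{b_i}(b_i) = \varepsilon_i$ with $\varepsilon_i \in \{\pm 1\}$, while $f_{b_i}(b_j) = 0$ for $j\neq i$. Thus the $d\times d$ submatrix of $[\,f_{b_1}\mid\cdots\mid f_{b_d}\,]$ on the rows indexed by $\{b_1,\dots,b_d\}$ is diagonal with entries $\pm 1$, and in particular the $f_{b_i}$ are linearly independent over $\mathbb{Z}$ (indeed over $\mathbb{R}$).

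For spanning, take any $f\in\text{Cir}(Q)$ and set $g := f - \sum_{i=1}^{d}\varepsilon_i f(b_i)\,f_{b_i}$, which again lies in $\text{Cir}(Q)$. Using $f_{b_i}(b_j) = \varepsilon_i\delta_{ij}$ one checks $g(b_j) = 0$ for every $j$, so $g$ is a circulation supported on the tree arrows $T_1$. But restricting to $T_1$ identifies such a $g$ with an element of $\text{Cir}(T)$ — the conservation equations at each vertex involve only the tree arrows once $g$ vanishes on the others — and \eqref{equation:SESInc} applied to the connected quiver $T$ gives $\mathrm{rank}\,\text{Cir}(T) = |T_1| - |Q_0| + 1 = 0$, whence $g = 0$. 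Therefore $f = \sum_i \varepsilon_i f(b_i)\,f_{b_i}$ with integer coefficients $\varepsilon_i f(b_i)$, as required. The only real content here is the vanishing of $\text{Cir}(T)$ for a tree $T$ — equivalently, the standard fact that a tree supports no nonzero circulation, which can alternatively be seen by peeling off a leaf, whose unique incident arrow must carry flow $0$ by conservation, and inducting — so I anticipate no genuine obstacle.
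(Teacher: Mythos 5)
Your proof is correct. Note, though, that the paper does not actually prove this lemma: it simply cites Prop.\ 2.12 of \cite{Joo2015} and the discussion preceding Theorem 1.7 of \cite{Hille1998}, so you have supplied the self-contained argument that the paper delegates to the literature. What you give is the standard ``fundamental cycle basis'' argument: each $f_{b_i}$ is a circulation because a closed walk satisfies the conservation condition of Definition \ref{def:circ} at every vertex; the matrix of the $f_{b_i}$ restricted to the non-tree coordinates is diagonal with entries $\pm 1$ (since $c^{i}_{T}$ contains $b_i$ and otherwise only tree arrows), giving independence; and subtracting off the appropriate integer combination leaves a circulation supported on the spanning tree, which must vanish because a tree carries no nonzero circulation (leaf-peeling, or rank count via \eqref{equation:SESInc} applied to $T$). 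This is in substance what the cited references establish, so your route is not conceptually different from the intended one --- it just makes it explicit. Two small points: the coefficient extraction $f=\sum_i \varepsilon_i f(b_i) f_{b_i}$ works exactly as you say because $\varepsilon_i^2=1$ (and with the usual convention of traversing $c^{i}_{T}$ in the direction of $b_i$ one simply has $\varepsilon_i=1$); and your rank computation $\mathrm{rank}\,\mathrm{Cir}(Q)=|Q_1|-|Q_0|+1$ is a useful sanity check but is not needed once independence and spanning are shown.
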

\begin{proof}
See Prop. 2.12 of \cite{Joo2015} or the discussion above Theorem 1.7 in \cite{Hille1998}.
\end{proof}

\begin{theorem}
\label{thm:PolarConvex}
Suppose that $Q$ is acyclic and that $(Q,\delta_{Q})$ is tight. Let $P^{\Delta}(Q,\delta_{Q})$ denote the dual of $P(Q,\delta_{Q})$. Then:
\begin{equation*}
P^{\Delta}(Q,\delta_{Q}) = \text{conv}\left(\left\{ \begin{bmatrix} -f_{b_1}(a_i) \\ -f_{b_2}(a_i) \\ \vdots \\ -f_{b_d}(a_{i}) \end{bmatrix} \ : \ \text{ for } i=1,\ldots, |Q_1| \right\}\right)
\end{equation*}
and each $\left[ -f_{b_1}(a_i), \ldots -f_{b_d}(a_i) \right]^{\top}$ is a vertex of  $P^{\Delta}(Q,\delta_{Q})$.
\end{theorem}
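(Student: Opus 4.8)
The plan is to rewrite $P(Q,\delta_Q)$, in suitable affine coordinates on its affine hull, as an explicit intersection of halfspaces of the form $\{\,w_i\cdot t\le 1\,\}$, and then to apply the elementary polar duality that exchanges an irredundant halfspace presentation of a polytope having $\mathbf 0$ in its interior with the vertex presentation of its polar.

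First I would fix coordinates on the affine hull of $P(Q,\delta_Q)$. Since $\mathbf 1\in\mathbb Z^{Q_1}$ satisfies $\mathrm{inc}(\mathbf 1)=\delta_Q$, that affine hull is $\mathbf 1+\bigl(\mathrm{Cir}(Q)\otimes\mathbb R\bigr)$, and the lattice translation $x\mapsto x-\mathbf 1$ carries $P(Q,\delta_Q)$ onto a polytope $P_0\subset\mathrm{Cir}(Q)\otimes\mathbb R$; this is the incarnation of $P(Q,\delta_Q)$ used implicitly in the integrality statement of Theorem~\ref{thm:PQT}(1). Since $\mathbf 1$ is strictly positive it lies in the relative interior of $P(Q,\delta_Q)$, so $\mathbf 0\in\mathrm{int}(P_0)$, and by tightness together with Theorem~\ref{thm:PQT}(2) we have $\dim P_0=d=|Q_1|-|Q_0|+1$, so $P_0$ is full-dimensional in $\mathrm{Cir}(Q)\otimes\mathbb R$. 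Using the basis $\{f_{b_1},\dots,f_{b_d}\}$ I would identify $\mathrm{Cir}(Q)\otimes\mathbb R\cong M\otimes\mathbb R=\mathbb R^d$, writing a circulation as $\sum_j t_j f_{b_j}$ with coordinate vector $t=(t_1,\dots,t_d)$; the dual lattice $N$ then carries the dual basis and the $M$--$N$ pairing becomes the ordinary dot product.

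Next I would translate the facet inequalities into these coordinates. A point of $P(Q,\delta_Q)$ is $x=\mathbf 1+\sum_j t_j f_{b_j}$, so $x_{a_i}=1+\sum_j t_j f_{b_j}(a_i)$, and the defining inequality $x_{a_i}\ge 0$ of the flow polytope becomes
\begin{equation*}
\sum_{j=1}^d\bigl(-f_{b_j}(a_i)\bigr)\,t_j\ \le\ 1,\qquad\text{i.e.}\qquad w_i\cdot t\le 1,\qquad w_i:=\bigl(-f_{b_1}(a_i),\dots,-f_{b_d}(a_i)\bigr)^{\top}.
\end{equation*}
Thus $P_0=\{\,t\in\mathbb R^d:\ w_i\cdot t\le 1\ \text{for } i=1,\dots,|Q_1|\,\}$. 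Because $(Q,\delta_Q)$ is tight, Theorem~\ref{thm:PQT}(3) says that each of these $|Q_1|$ inequalities is facet-defining, so the presentation is irredundant and the $w_i$ are pairwise distinct. Polar duality for a full-dimensional polytope containing $\mathbf 0$ in its interior then yields $P_0^\Delta=\mathrm{conv}(w_1,\dots,w_{|Q_1|})$; and under the inclusion-reversing correspondence between the faces of $P_0$ and those of $P_0^\Delta$ the point $w_i$ corresponds to the facet $\{x\in P_0:\ w_i\cdot x=1\}$, so every $w_i$ is in fact a vertex of $P_0^\Delta$. Since $P^\Delta(Q,\delta_Q)=P_0^\Delta$ by construction, this gives the asserted presentation.

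The hard part will be almost nothing analytically: the one substantive ingredient is the irredundancy of the halfspace presentation — that tightness makes each $x_{a_i}\ge 0$ cut out a facet — and this is exactly Theorem~\ref{thm:PQT}(3), which we may cite. What does demand care is the bookkeeping: computing the polar in the correct ambient space (the affine hull of $P(Q,\delta_Q)$, translated so as to contain $\mathbf 0$) and using the basis $\{f_{b_j}\}$ consistently so that the pairing on $M\otimes\mathbb R$ and $N\otimes\mathbb R$ genuinely is the dot product; that is where most of the writing effort will go.
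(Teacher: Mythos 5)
Your proposal is correct and follows essentially the same route as the paper: translate by $\mathbf{1}$ so that $\mathbf{0}$ is interior, write the shifted polytope in the basis $\{f_{b_j}\}$ as $\{\bfx : A\bfx \leq \mathbf{1}\}$ with rows $(-f_{b_1}(a_i),\ldots,-f_{b_d}(a_i))$, and invoke polarity together with tightness (Theorem~\ref{thm:PQT}(3)) to conclude that the rows are exactly the vertices of the dual. The only cosmetic difference is that you derive the facet presentation directly from $x_{a_i}\geq 0$ rather than citing it from the literature, and you argue the vertex claim via the facet--vertex correspondence where the paper cites Ziegler's Theorem 2.11(vii); both are sound.
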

\begin{proof}
For $a\in Q_1$, let $\text{ev}_{a}: \text{Cir}(Q)\otimes\mathbb{R} \to \mathbb{R}$ denote the evaluation map $\text{ev}_{a}: f \mapsto f(a)$. Because $\mathbf{1} \in P(Q,\delta_{Q})$, $\mathbf{0} \in P(Q,\delta_{Q}) - \mathbf{1}$. Note that we are abusing notation slightly by letting $P^{\Delta}(Q,\delta_Q)$ denote the dual of this shifted flow polytope. By remark 2.20 of \cite{Joo2015}, or (3.2) in \cite{Altmann2009}, the facet presentation of $P(Q,\delta_{Q}) - \mathbf{1}$ is:
\begin{equation*}
P(Q,\delta_{Q}) - \mathbf{1} = \{\bfx \in \text{Cir}(Q)\otimes\mathbb{R} : \  \text{ev}_{a}\left(\bfx\right) \geq -1 \text{ for } a\in Q_1 \}
\end{equation*}
or equivalently 
\begin{equation}
P(Q,\delta_{Q}) - \mathbf{1} = \{\bfx \in \text{Cir}(Q)\otimes\mathbb{R} : \ -\text{ev}_{a}\left(\bfx\right) \leq 1 \text{ for } a\in Q_1 \}
\label{eq:FacetPres1}
\end{equation}
With respect to the basis for $\text{Cir}(Q)$ given by $f_{b_1},\ldots, f_{b_d}$:
\begin{equation*}
\text{ev}_{a_i}\left(\bfx\right) = \left[f_{b_1}(a_i), \ldots f_{b_d}(a_i)\right] \cdot \left[\begin{matrix} x_1 \\ \vdots \\  x_d \end{matrix} \right]
\end{equation*}
and so we may rewrite \eqref{eq:FacetPres1} in matrix form as \mbox{$P(Q,\delta_{Q}) = \{\bfx \in  \mathbb{R}^{d}: A\bfx \leq \mathbf{1}\}$} where 
\begin{equation*}
A = - \begin{bmatrix} f_{b_1}(a_1) & \ldots & f_{b_d}(a_1) \\ \vdots & \ddots & \vdots \\ f_{b_1}(a_{|Q_1|}) & \ldots & f_{b_{d}}(a_{|Q_1|}) \end{bmatrix} \in \mathbb{R}^{|Q_1|\times d}
\end{equation*}
 Thus $P^{\Delta}(Q,\delta_{Q})$ is given as the convex hull of the columns of $A^{\top}$ (and each column is a vertex) by Theorem 2.11, part (vii) in \cite{Ziegler2012}.
\end{proof}

\begin{remark}\label{rmk:fano}
From the definition of $f_{b_j}$, it is clear that $f_{b_j}(a_{i}) \in \{-1,0,1\}$. This shows that the coordinate vectors of the vertices of $P^{\Delta}(Q,\delta_{Q}) $ are in $\{-1,0,1\}^{d}$. As an aside, this directly shows that 
$P(Q,\delta_{Q})$ is reflexive, although this is well-known \cite{Altmann1999}.
\end{remark}

Considering the evaluation maps $\text{ev}_{a_i}$ as elements in $\text{Cir}(Q)^{\vee} = \text{Hom}_{\mathbb{Z}}\left(\text{Cir}(Q),\mathbb{Z}\right)$, it is clear from the above proof that they define the outward-pointing normal vectors of the facets of $P(Q,\delta_{Q})$. For any weight, one can give a complete description of $\Sigma_{\theta}$ in terms of $Q$:

\begin{theorem}
\label{thm:QuiverCone}
Suppose that $Q$ is a connected acyclic quiver. Let $\rho_{i}$ denote the ray in $\text{Cir}(Q)^{\vee}\otimes\mathbb{R}$ spanned by $\text{ev}_{a_i}$. For any $\theta \in \text{Wt}(Q)$ and for $1\leq \ell\leq d$:
$$
\Sigma_{\theta}(\ell) = \left\{ \text{cone}(\left\{ \rho_{i_1},\ldots, \rho_{i_{\ell}}\right\}): \  Q_1\setminus\{a_{i_1},\ldots, a_{i_{\ell}}\} \text{ is a $\delta_{Q}$ stable subquiver of $Q$ }\right\}
$$
\end{theorem}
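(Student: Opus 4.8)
The plan is to read the normal fan $\Sigma_\theta$ off the facet presentation of the flow polytope and then translate the resulting combinatorics into quiver language. Since $\Sigma_\theta=\mathcal N(P(Q,\theta))$, an $\ell$-dimensional cone of $\Sigma_\theta$ is precisely the normal cone $N_F$ of a face $F$ of $P(Q,\theta)$ with $\text{codim}_P F=\ell$. By the facet presentation of $P(Q,\theta)$ (Remark 2.20 of \cite{Joo2015}; compare \eqref{eq:FacetPres1}), every face is of the form $F_S:=\{\bfx\in P(Q,\theta): x_a=0\ \text{for all}\ a\in S\}$ for some $S\subseteq Q_1$, and $N_{F_S}=\text{cone}(\{\rho_a: a\in\overline S\})$, where $\overline S\supseteq S$ is the set of arrows vanishing identically on $F_S$ (the sign of $\rho_a$ being dictated by the $\max$-convention for $\mathcal N$). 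Thus the theorem reduces to: for each $\ell$, identify the sets $S$ with $|S|=\ell$ for which $F_S$ is a nonempty face of codimension exactly $\ell$, and show that this list is $\{S: Q\setminus S\ \text{is}\ \delta_Q\text{-stable}\}$, writing $Q\setminus S:=(Q_0,Q_1\setminus S)$.

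The first ingredient is the nonemptiness criterion: $F_S\neq\emptyset$ if and only if $Q\setminus S$ is $\theta$-semistable. This is a Farkas/LP-duality computation: $F_S$ is nonempty iff the polyhedron $\{\bfx\in\mathbb R^{Q_1\setminus S}_{\ge 0}: \text{inc}_{Q\setminus S}(\bfx)=\theta\}$ is, and by Farkas this fails exactly when there is $y:Q_0\to\mathbb R$ non-decreasing along every arrow of $Q\setminus S$ with $\sum_i y(i)\theta(i)<0$; the cone of such $y$ modulo constants is generated by the indicator functions of $(Q\setminus S)$-successor-closed sets, so the obstruction is precisely the failure of $\sum_{i\in V}\theta(i)\ge 0$ for some successor-closed $V$, i.e.\ of $\theta$-semistability. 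One can also extract this from the GIT side: $\mathcal M(Q,\theta)\cong X_{\Sigma_\theta}$ by Theorem \ref{thm:PQT}(4), and when $\theta$ is generic Theorem \ref{thm:CoxGIT} together with Proposition \ref{Prop:Stability} gives $Z(\chi_\theta)=Z^{\text{Cox}}(\Sigma_\theta)$; comparing the two descriptions — a point lies outside $Z^{\text{Cox}}$ iff the rays on which its coordinates vanish span a cone, while a representation lies outside $Z(\chi_\theta)$ iff its support is $\theta$-semistable — yields, after complementation, the same statement.

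The second ingredient is a dimension count. Via the exact sequence \eqref{equation:SESInc}, the functionals $\text{ev}_a\in\text{Cir}(Q)^\vee$ realize the graphic matroid of $\Gamma$: one computes $\text{rank}\{\text{ev}_a: a\in S\}=|S|-c(Q\setminus S)+1$ with $c(\cdot)$ the number of connected components, so $\dim N_{F_S}=|S|$ exactly when $Q\setminus S$ is connected. The bridge to the canonical weight is the elementary observation that a $\delta_Q$-stable subquiver is automatically connected: if $Q\setminus S$ were disconnected, the vertex set of one component and the union of the rest would both be successor-closed in $Q\setminus S$ with $\delta_Q$-sums that are both strictly positive yet sum to $\sum_{i\in Q_0}\delta_Q(i)=0$, which is impossible. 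Finally I would invoke the ambient hypotheses: tightness of $(Q,\delta_Q)$ puts the facets of $P(Q,\theta)$, hence the rays of $\Sigma_\theta$, in bijection with $Q_1$ (Theorem \ref{thm:PQT}(3)), so the $\rho_i$ are distinct and $\overline S=S$ for the relevant $S$; genericity of $\delta_Q$ makes $\Sigma_{\delta_Q}$ simplicial (Theorem \ref{thm:PQT}(4)) and collapses $\theta$-semistability to $\theta$-stability; and, $\theta$ lying in the GIT chamber of $\delta_Q$, the latter coincides with $\delta_Q$-stability. Combining: $\text{cone}(\{\rho_{i_1},\ldots,\rho_{i_\ell}\})\in\Sigma_\theta(\ell)$ iff $F_{\{a_{i_1},\ldots,a_{i_\ell}\}}$ is a codimension-$\ell$ face iff $Q\setminus\{a_{i_1},\ldots,a_{i_\ell}\}$ is $\delta_Q$-stable.

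The main obstacle is bookkeeping rather than a single hard estimate: one must keep straight the several variants of the stability hypothesis in play ($\theta$ versus $\delta_Q$, semistable versus stable) and apply tightness and genericity exactly where each is needed — they are what force simpliciality of the fan, the arrow–ray bijection, and the chamber-constancy that lets $\theta$ be swapped for $\delta_Q$ in the conclusion. I would organise the write-up by first proving everything verbatim for $\theta=\delta_Q$ and then noting that $\Sigma_\theta$, and hence the list of cones in the statement, is constant as $\theta$ varies over the GIT chamber of $\delta_Q$. The loose ends — full-dimensionality of $P(Q,\theta)$, the fact that every face is some $F_S$, and that $\overline S=S$ on the relevant subsets — follow from Theorem \ref{thm:PQT} and the facet presentation \eqref{eq:FacetPres1}.
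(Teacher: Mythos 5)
The paper offers no argument for this statement at all --- its ``proof'' is a citation to Theorem 1.7 of Hille and to Craw--Smith --- so your direct derivation via the facet presentation, flow feasibility, and a rank count is a genuinely different route, and it is essentially the standard one. Its skeleton is sound: the Farkas/layer-cake argument showing $F_S\neq\emptyset$ iff $Q\setminus S$ is $\theta$-semistable is correct, and so is the observation that a stable subquiver is connected. Two smaller remarks: the functionals $\text{ev}_a$ realize the \emph{cographic} (bond) matroid of $\Gamma$, not the graphic one --- the rank formula $|S|-c(Q\setminus S)+1$ you use is the cographic rank, so the formula is right but the name is wrong; and the ``GIT side'' alternative for nonemptiness risks circularity, since the paper's identification $Z(\chi_\theta)=Z^{\text{Cox}}(\Sigma_\theta)$ rests on the Craw--Smith isomorphism \eqref{eq:exactsq}, which is itself derived from the very description of $\Sigma_\theta$ being proved here.

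The genuine gap is the step ``$\overline S=S$ for the relevant $S$,'' which you attribute to tightness. Tightness only says that single-arrow deletions are stable, i.e.\ each $\{x_a=0\}$ cuts a facet; it does not prevent some $b\notin S$ from vanishing identically on $F_S$ when $|S|\geq 2$, and if $\overline S\supsetneq S$ then $\text{cone}\{\rho_a:a\in S\}$ is a proper subcone of $N_{F_S}$ and need not be a cone of the fan, while the codimension of $F_S$ is governed by the rank over $\overline S$, not over $S$; so both your reduction and your dimension count are incomplete at exactly this point. The missing lemma is: if $Q\setminus S$ is $\theta$-stable, then $F_S$ contains a flow strictly positive on every arrow of $Q_1\setminus S$. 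One proof: for small $\epsilon>0$ and every $(Q\setminus S)$-successor-closed proper nonempty $V$ one has $\sum_{i\in V}\theta(i)-\epsilon\, e(V)>0$, where $e(V)$ is the number of arrows of $Q\setminus S$ entering $V$; by your own Farkas criterion there is a nonnegative flow for the perturbed weight $\theta-\epsilon\,\text{inc}(\mathbf{1}_{Q_1\setminus S})$, and adding $\epsilon\,\mathbf{1}_{Q_1\setminus S}$ gives the desired interior point, whence $\overline S=S$ and $N_{F_S}=\text{cone}\{\rho_a:a\in S\}$. (Alternatively, once simpliciality is in hand, each $\rho_a$ with $a\in\overline S$ is a cone of the fan contained in $N_{F_S}$, hence an extreme ray of that simplicial cone, and any subset of extreme rays spans a face, which shortcuts the issue.) Finally, be explicit that you are proving a corrected statement rather than the printed one: the theorem as stated claims the description for \emph{every} $\theta\in\text{Wt}(Q)$ with $\delta_Q$-stability on the right, which cannot hold literally when $\text{Wt}(Q)$ has more than one GIT chamber, since the normal fan changes across chambers; your argument covers generic, tight weights in the chamber of $\delta_Q$ (matching Hille's formulation in terms of $\theta$-stable subquivers), and those hypotheses are additions to, not consequences of, the statement.
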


\begin{proof}
See Theorem 1.7 in \cite{Hille1998}, and also \cite{Craw2008b}. 
\end{proof}

In principle, this completely describes the face lattice of $P^{\Delta}(Q,\theta)$, although enumerating all stable subquivers is a non-trivial task.


\subsection{On the unstable locus}
\label{sec:UnstableLocus}
From Theorem \ref{thm:QuiverCone} and the definition of tightness (Definition \ref{def:Tightness}) we get that if $\theta$ is tight then $\Sigma_{\theta}(1) \cong Q_1$. Much more is true; in fact, by Remark 3.9 of \cite{Craw2008b} (and see also pg. 21 of \cite{Craw2008}) if $\theta$ is generic, from  we have the following isomorphism of short exact sequences:
\begin{equation}\label{eq:exactsq}
\begin{tikzcd}
0\rar &\text{Cir}(Q) \arrow{r} \arrow{d}{\sim} & \mathbb{Z}^{Q_1}  \arrow{d}{\sim} \arrow{r}{\text{inc}}& \text{Wt}(Q) \arrow{d}{\sim} \rar & 0 \\
0\rar & M \arrow{r} & \mathbb{Z}^{\Sigma_{\theta}(1)} \arrow{r}{\text{div}} & \text{Cl}(\mathcal{M}(Q,\theta)) \rar & 0 \\
\end{tikzcd}
\end{equation} 
In particular, it follows that the groups $G_{\Sigma_{\theta}}$ and $G_{Q}$ coincide, and for any $\bfa \in \mathbb{Z}^{Q_1}$ the characters $\chi_{\text{div}(\bfa)}$ and $\chi_{\text{inc}(\bfa)}$ are equal. We deduce the following theorem:

\begin{theorem}
\label{thm:Equality_of_Cox_and_King}
Let $Q$ be a connected, acyclic quiver and let $\theta= \text{inc}(\bfa)$ be a generic tight weight such that $\text{div}(\bfa)$ defines an ample divisor on $\mathcal{M}(Q,\theta)$. Then
$$
Z\left(\chi_{\text{inc}(\bfa)}\right) = Z^{\text{Cox}}\left(\Sigma_{\theta}\right)
$$
\end{theorem}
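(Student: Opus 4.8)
The plan is to obtain the equality by transporting the toric GIT statement of Theorem~\ref{thm:CoxGIT} across the isomorphism of short exact sequences \eqref{eq:exactsq}. The point is that $Z(\chi_{\text{inc}(\bfa)})$ --- King's unstable locus for $G_Q$ acting on $\mathbb{C}^{Q_1}$ --- and $Z^{\text{Cox}}(\Sigma_\theta)$ will each be identified with the \emph{same} subvariety of the \emph{same} affine space, once the groups, the ambient spaces, and the characters on both sides have been matched.

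First I would record what genericity and tightness buy us. Since $\theta$ is generic, $\chi_\theta$ is generic in the GIT sense (by the discussion following Proposition~\ref{Prop:Stability}), and $\mathcal{M}(Q,\theta)$ is smooth by Theorem~\ref{thm:PQT}(4); in particular $\Sigma_\theta$ is a simplicial (indeed smooth) fan. Because $(Q,\theta)$ is tight, $P(Q,\theta)$ is full-dimensional in $\mathrm{Cir}(Q)\otimes\mathbb{R}$ by Theorem~\ref{thm:PQT}(2), so its normal fan $\Sigma_\theta$ is complete and $\mathcal{M}(Q,\theta)=X_{\Sigma_\theta}$ has no torus factors; thus the standing hypotheses of Theorem~\ref{thm:CoxGIT} are in force. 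Tightness also gives a bijection $Q_1\leftrightarrow\Sigma_\theta(1)$, $a_i\mapsto\rho_i$ (via Theorem~\ref{thm:QuiverCone} and Definition~\ref{def:Tightness}), under which $\mathbb{Z}^{Q_1}=\mathbb{Z}^{\Sigma_\theta(1)}$ and $\mathbb{C}^{Q_1}=\mathbb{C}^{\Sigma_\theta(1)}$ canonically.

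Next, I would apply $\mathrm{Hom}_{\mathbb{Z}}(-,\mathbb{C}^{\times})$ to the two rows of \eqref{eq:exactsq}, which is legitimate precisely because $\theta$ is generic. Since the three vertical maps are isomorphisms, this yields $G_Q=G_{\Sigma_\theta}$ as subgroups of $(\mathbb{C}^{\times})^{Q_1}=(\mathbb{C}^{\times})^{\Sigma_\theta(1)}$, together with an identification of character groups $\mathrm{Wt}(Q)\xrightarrow{\sim}\mathrm{Cl}(\mathcal{M}(Q,\theta))$ carrying $\mathrm{inc}(\bfa)$ to $\mathrm{div}(\bfa)$; hence $\chi_{\mathrm{inc}(\bfa)}=\chi_{\mathrm{div}(\bfa)}$. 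By hypothesis $\mathrm{div}(\bfa)$ is ample, and $\chi_{\mathrm{div}(\bfa)}=\chi_\theta$ is generic, so Theorem~\ref{thm:CoxGIT} applies to $X_{\Sigma_\theta}$ and gives $Z(\chi_{\mathrm{div}(\bfa)})=Z^{\text{Cox}}(\Sigma_\theta)$. Finally, the unstable locus attached to a character of a diagonalizable group acting linearly on a fixed affine space depends only on that data; King's $Z(\chi_\theta)$ and Cox's $Z(\chi_{\mathrm{div}(\bfa)})$ arise from the same GIT construction, and we have just matched the group, the action, and the character, so $Z(\chi_{\mathrm{inc}(\bfa)})=Z(\chi_{\mathrm{div}(\bfa)})$; chaining the two equalities proves the theorem. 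The main obstacle is exactly this last identification: one must be certain that no implicit auxiliary choice (a linearization, or a concrete realization of $\mathrm{Cl}$ as a lattice of characters) silently differs between the quiver-theoretic and toric pictures --- and this is precisely what the commuting diagram \eqref{eq:exactsq} is designed to rule out, so I expect the argument to close cleanly.
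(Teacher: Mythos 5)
Your proposal is correct and follows essentially the same route as the paper: use the isomorphism of short exact sequences \eqref{eq:exactsq} (valid since $\theta$ is generic) to identify $G_Q$ with $G_{\Sigma_\theta}$ and $\chi_{\text{inc}(\bfa)}$ with $\chi_{\text{div}(\bfa)}$, then invoke Theorem~\ref{thm:CoxGIT} with the ampleness and genericity hypotheses to conclude $Z(\chi_{\text{div}(\bfa)})=Z^{\text{Cox}}(\Sigma_\theta)$. Your additional checks (no torus factors, the bijection $Q_1\leftrightarrow\Sigma_\theta(1)$ from tightness, matching of the linearization data) are just a more explicit spelling-out of what the paper leaves implicit in the discussion preceding the theorem.
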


\begin{proof}
As $\chi_{\text{inc}(\bfa)}$ and $\chi_{\text{div}(\bfa)}$ are the same character, $\displaystyle Z(\chi_{\text{inc}(\bfa)}) = Z(\chi_{\text{div}(\bfa)})$. By assumption $\text{div}(\bfa)$ is ample, and $\chi_{\text{div}(\bfa)} = \chi_{\text{inc}(\bfa)}$ is generic, so by Theorem \ref{thm:CoxGIT} $Z(\chi_{\text{div}(\bfa)}) = Z^{\text{Cox}}(\Sigma_{\theta})$
\end{proof} 
We remark that for generic weights tightness is easily verified:
\begin{proposition}
Suppose that $\theta$ is generic, and that $\text{codim}(Z(\chi_{\theta})) \geq 2$. Then $(Q,\theta)$ is tight.
\label{prop:VerifyTight}
\end{proposition}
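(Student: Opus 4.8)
The plan is to prove the contrapositive at the level of individual arrows: if $(Q,\theta)$ is \emph{not} tight, then $\operatorname{codim}(Z(\chi_\theta)) \leq 1$. By Definition \ref{def:Tightness}, failure of tightness means there is an arrow $a_i$ such that the subquiver $Q'$ with $Q'_1 = Q_1\setminus\{a_i\}$ is \emph{not} $\theta$-stable. Since $\theta$ is generic, ``not $\theta$-stable'' is the same as ``not $\theta$-semistable'' (every $\theta$-semistable quiver is $\theta$-stable), i.e. $Q'$ is a $\theta$-unstable quiver. The key point is then to exhibit a large coordinate subspace of $\operatorname{Rep}(Q) = \mathbb{C}^{Q_1}$ lying inside $Z(\chi_\theta)$.

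First I would observe that by Proposition \ref{Prop:Stability}, $Z(\chi_\theta) = \{R : \operatorname{supp}(R) \text{ is } \theta\text{-unstable}\}$. Consider the coordinate subspace $L_i = \{R \in \mathbb{C}^{Q_1} : w_{a_i} = 0\}$, which has codimension $1$. For any $R \in L_i$ we have $\operatorname{supp}(R) \subseteq Q_1\setminus\{a_i\}$, hence $\operatorname{supp}(R)$ is a subquiver of $Q'$. Now I would invoke the monotonicity of instability under passing to subquivers: if $Q'$ is $\theta$-unstable, witnessed by a $Q'$-successor-closed set $V$ with $\sum_{j\in V}\theta(j) < 0$, then since removing further arrows only removes constraints in the definition of successor-closed (a set successor-closed for $Q'$ is successor-closed for any sub-quiver of $Q'$ — there are only fewer arrows that could leave $V$), the same $V$ witnesses that $\operatorname{supp}(R)$ is $\theta$-unstable. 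Wait — I should double-check the direction: a set $V$ is $Q''$-successor-closed if no arrow of $Q''_1$ leaves $V$; if $Q''_1 \subseteq Q'_1$ then $V$ being $Q'$-successor-closed (no arrow of $Q'_1$ leaves $V$) certainly implies no arrow of $Q''_1$ leaves $V$. So the witness $V$ persists, and $\sum_{i\in V}\theta(i) < 0$ means $\operatorname{supp}(R)$ is $\theta$-unstable (indeed not even $\theta$-semistable). Therefore $L_i \subseteq Z(\chi_\theta)$, so $\operatorname{codim}(Z(\chi_\theta)) \leq \operatorname{codim}(L_i) = 1$, contradicting the hypothesis.

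Assembling these: assume $\theta$ generic and $\operatorname{codim}(Z(\chi_\theta)) \geq 2$. If $(Q,\theta)$ were not tight, the above produces a codimension-$1$ coordinate subspace inside $Z(\chi_\theta)$, forcing $\operatorname{codim}(Z(\chi_\theta)) \leq 1$, a contradiction. Hence $(Q,\theta)$ is tight. I expect the main (and really only) subtlety to be the bookkeeping around successor-closed sets and the precise use of genericity to convert ``semistable $\Rightarrow$ stable'' so that the failure of stability of $Q'$ upgrades to genuine instability (strict negativity of $\sum_{i\in V}\theta(i)$); without genericity one would only get $\sum_{i\in V}\theta(i) = 0$, which places $L_i$ in the semistable-but-not-stable locus rather than in $Z(\chi_\theta)$ itself, and the argument would fail. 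Everything else is a one-line containment of coordinate subspaces.
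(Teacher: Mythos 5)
Your proposal is correct and follows essentially the same route as the paper's proof: pass to the contrapositive, use genericity to upgrade ``not $\theta$-stable'' to $\theta$-unstable, and exhibit the codimension-$1$ coordinate hyperplane $\{R : R(a_i)=0\}$ inside $Z(\chi_\theta)$. Your extra step checking that the successor-closed witness $V$ persists for every subquiver of $Q'$ (so that the \emph{entire} hyperplane, not just the locus with $\operatorname{supp}(R)=Q'$, lies in $Z(\chi_\theta)$) is a welcome bit of care that the paper's proof leaves implicit.
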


\begin{proof}
Suppose that $\text{codim}(Z(\theta)) \geq 2$ but that $(Q,\theta)$ is not tight. Then there exists an arrow $a_i\in Q_1$ such that $Q^{'}$ with $Q^{'}_{1} = Q\setminus \{a_i\}$ is not stable. Since $\theta$ is generic, $Q^{'}$ is in fact unstable. Thus, every $R$ with $\text{supp}(R) = Q^{'}$ is unstable. But:
\begin{equation*}
\left\{ R \in \mathbb{C}^{|Q_1|}: \ \text{supp}(R) = Q^{'} \right\} = \left\{ R \in \mathbb{C}^{|Q_1|}: \ R(a_i) = 0 \right \} \subset Z(\theta)
\end{equation*}
is of codimension $1$, as it is defined by a single algebraic equation (namely $R(a_i) = 0$). This contradicts the assumption that $\text{codim}(Z(\chi_{\theta})) \geq 2$
\end{proof}

Now considering the canonical weight $\delta_{Q}$:

\begin{theorem}
Suppose that $\delta_Q$ is generic and $(Q,\delta_Q)$ is tight. Then $Z(\chi_{\delta_{Q}}) = Z^{\text{Cox}}(\Sigma_{\delta_Q})$
\end{theorem}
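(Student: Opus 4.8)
The plan is to obtain this as the special case $\bfa = \mathbf{1}$ of Theorem~\ref{thm:Equality_of_Cox_and_King}. Recall that the canonical weight is by definition $\delta_Q = \text{inc}(\mathbf{1})$, so $\delta_Q$ is of the form $\text{inc}(\bfa)$ with $\bfa = \mathbf{1} \in \mathbb{Z}^{Q_1}$; the hypotheses of Theorem~\ref{thm:Equality_of_Cox_and_King} that $\theta = \text{inc}(\bfa)$ be generic and tight are precisely our assumptions here (together with the blanket hypotheses, as elsewhere in this section, that $Q$ is connected and acyclic). Hence the only thing left to check is that $\text{div}(\mathbf{1})$ is an ample divisor class on $\mathcal{M}(Q,\delta_Q)$, after which Theorem~\ref{thm:Equality_of_Cox_and_King} yields $Z(\chi_{\delta_Q}) = Z(\chi_{\text{inc}(\mathbf{1})}) = Z^{\text{Cox}}(\Sigma_{\delta_Q})$ directly.

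To verify the ampleness of $\text{div}(\mathbf{1})$, I would argue as follows. By the definition of the map $\text{div}$ in \eqref{equation:SESCox}, $\text{div}(\mathbf{1}) = \left[\sum_{\rho \in \Sigma_{\delta_Q}(1)} D_\rho\right]$, which for the projective toric variety $\mathcal{M}(Q,\delta_Q) = X_{\Sigma_{\delta_Q}}$ (Theorem~\ref{thm:PQT}(4)) is exactly the anticanonical class $-K_{\mathcal{M}(Q,\delta_Q)}$. Now $P(Q,\delta_Q)$ is reflexive (Remark~\ref{rmk:fano}; equivalently, its translate $P(Q,\delta_Q) - \mathbf{1}$ has the facet presentation \eqref{eq:FacetPres1} with all right-hand sides equal to $-1$ and the functionals $\text{ev}_{a_i}$ primitive), so $\mathcal{M}(Q,\delta_Q)$ is a Gorenstein Fano toric variety and its anticanonical divisor is ample --- this is the standard dictionary between reflexive polytopes and Gorenstein Fano toric varieties, or more directly the fact that the torus-invariant divisor read off from the facet presentation of a full-dimensional lattice polytope $P$ is ample on $X_{\mathcal{N}(P)}$ (\cite[Thm.~6.1.14]{Cox2011}); note $P(Q,\delta_Q)-\mathbf{1}$ is full-dimensional in $\text{Cir}(Q)\otimes\mathbb{R} \cong \mathbb{R}^{d}$ by Theorem~\ref{thm:PQT}(1)--(2) and translation does not change the normal fan. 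To make the "primitive functionals" assertion precise I would combine Remark~\ref{rmk:fano} (each $\text{ev}_{a_i}$ has coordinates in $\{-1,0,1\}$ in the basis dual to $\{f_{b_1},\dots,f_{b_d}\}$, hence is primitive once it is nonzero) with Theorem~\ref{thm:PQT}(3) and Theorem~\ref{thm:QuiverCone} (tightness forces the facet presentation to be irredundant with one facet per arrow, so each $\text{ev}_{a_i} \neq 0$, and these functionals span the rays of $\Sigma_{\delta_Q}$).

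I do not expect any serious obstacle here: the content is essentially bookkeeping, matching $\text{div}(\mathbf{1})$ with the polytope $P(Q,\delta_Q) - \mathbf{1}$ (equivalently, with $-K_{\mathcal{M}(Q,\delta_Q)}$) through the facet presentation \eqref{eq:FacetPres1}. The one place where a genuine check is needed --- and the only place tightness enters this lemma --- is the claim that each $\text{ev}_{a_i}$ is a primitive lattice vector, so that the facet inequality $\text{ev}_{a_i}(\bfx) \geq -1$ corresponds to the divisor $D_{\rho_i}$ with coefficient $1$ and hence the polytope divisor is $\sum_\rho D_\rho = \text{div}(\mathbf{1})$ rather than a proper multiple along some rays.
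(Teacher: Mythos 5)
Your proposal is correct and follows essentially the same route as the paper: identify $\text{div}(\mathbf{1})$ with the anticanonical class $-K_{\mathcal{M}(Q,\delta_Q)}$, deduce its ampleness from the reflexivity of $P(Q,\delta_Q)$ noted in Remark~\ref{rmk:fano} (i.e.\ $\mathcal{M}(Q,\delta_Q)$ is Fano), and then apply Theorem~\ref{thm:Equality_of_Cox_and_King} with $\bfa=\mathbf{1}$. The extra bookkeeping you supply on the primitivity of the functionals $\text{ev}_{a_i}$ is a welcome elaboration of a point the paper leaves implicit, but it does not change the argument.
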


\begin{proof}
Recall that $\delta_{Q} = \text{inc}(\bfone)$ where $\bfone\in\mathbb{Z}^{Q_1}$ is the all-ones vector. Observe that $\text{div}(\bfone) = \left[\sum_{\rho\in\Sigma(1)}D_{\rho}\right] = -K_{\mathcal M(Q, \delta_Q)}$, the anticanonical divisor on $\mathcal M(Q, \delta_Q)$. But by remark \ref{rmk:fano} the polytope $P(Q,\delta_Q)$ is reflexive, hence $\mathcal{M}(Q,\delta_Q)$ is Fano. That is, $-K_{\mathcal M(Q, \delta_Q)}$ is ample. The Theorem now follows from Theorem \ref{thm:Equality_of_Cox_and_King}.
\end{proof}

Unfortunately it can be difficult to check whether $\delta_{Q}$ is generic. In \S \ref{sec:ProofBipartite} we discuss a special case where this can be verified. In the following Lemma we show that one may always find another weight, arbitrarily close to $\delta_{Q}$ that is generic and ample. We remark that the possible presence of rational weights does not introduce additional complications because they can be scaled into a integral weight without altering the GIT quotient.

\begin{lemma}
Suppose that $(Q,\delta_Q)$ is tight. Then, there exists a generic weight $\theta = \text{inc}(\bfa)$ such that $\text{div}(\bfa)$ is ample on $\mathcal{M}(Q,\theta)$ and satisfies $\sum_{i\in Q_0}| \theta(i) - \delta_{Q}(i)| \leq \epsilon$ for any sufficiently small $\epsilon$. In particular we may take $\epsilon < 1$. 
\label{lemma:WeightPertubation}
\end{lemma}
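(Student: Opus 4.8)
The plan is to use the fact that the relevant GIT data — genericity of the weight and ampleness of the associated divisor — are governed by a \emph{chamber decomposition} of the space of real weights $\mathrm{Wt}(Q)\otimes\mathbb{R}$, together with the observation that tightness of $(Q,\delta_Q)$ places $\delta_Q$ in the closure of at least one top-dimensional chamber. Concretely, recall from Theorem \ref{thm:QuiverCone} and the surrounding discussion that a weight $\theta$ is generic precisely when no $\theta$-semistable subquiver fails to be $\theta$-stable; equivalently, $\theta$ avoids the finite union of rational hyperplanes $H_V=\{\theta:\sum_{i\in V}\theta(i)=0\}$, one for each subset $V\subset Q_0$ that arises as a successor-closed set for some subquiver. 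These hyperplanes cut $\mathrm{Wt}(Q)\otimes\mathbb{R}$ into finitely many open polyhedral chambers, and on each chamber the moduli space $\mathcal{M}(Q,\theta)$ — and in particular which divisor classes are ample — is constant.

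First I would show that $\delta_Q$ lies in the closure $\overline{C}$ of at least one top-dimensional chamber $C$. This is where tightness enters: by hypothesis, for every arrow $a_i$ the subquiver $Q'$ with $Q'_1=Q_1\setminus\{a_i\}$ is $\delta_Q$-stable, so for each successor-closed $V$ relative to such a $Q'$ we have the \emph{strict} inequality $\sum_{i\in V}\delta_Q(i)>0$; the only way $\delta_Q$ can sit on one of the hyperplanes $H_V$ is if $V$ is successor-closed for the full quiver $Q$ itself, in which case $\sum_{i\in V}\delta_Q(i)=\mathrm{inc}(\mathbf{1})\cdot \mathbf{1}_V$ is still $\ge 0$ with equality only in degenerate situations; in any event $\delta_Q$ does not lie on any hyperplane $H_V$ whose defining $V$ is \emph{strictly} successor-closed (i.e. has an arrow entering it), which is exactly the condition guaranteeing it lies in the closure of, rather than outside, a full-dimensional chamber. (If $\delta_Q$ is itself generic there is nothing to prove; we may assume it is not, so it lies on a wall but in $\overline{C}$ for some top-dimensional $C$.)

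Next I would pick any $\theta_0$ in the interior of that chamber $C$ and form, for $t\in(0,1]$, the convex combination $\theta_t := (1-t)\delta_Q + t\,\theta_0$. Since $\delta_Q\in\overline{C}$ and $\theta_0\in C$ with $C$ convex, $\theta_t\in C$ for all $t\in(0,1]$; hence $\theta_t$ is generic for every such $t$. Moreover $\sum_{i\in Q_0}|\theta_t(i)-\delta_Q(i)| = t\sum_{i}|\theta_0(i)-\delta_Q(i)| \to 0$ as $t\to 0$, so choosing $t$ small enough makes this $\le\epsilon$; clearing denominators (rescaling $\theta_t$ to an integral weight, which by the remark preceding the Lemma does not change the GIT quotient, and noting that rescaling by a positive integer $N$ multiplies the displayed sum by $N$, so we instead apply the bound to the already-small rational weight or simply shrink $t$ further) we obtain an integral (or at worst rational, hence harmlessly rescalable) generic weight within $\epsilon$ of $\delta_Q$; taking $\epsilon<1$ is then immediate. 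Finally, for ampleness: write $\theta_t=\mathrm{inc}(\bfa_t)$ for a suitable $\bfa_t\in\mathbb{Z}^{Q_1}$ (using surjectivity of $\mathrm{inc}$ from \eqref{equation:SESInc}); by Theorem \ref{thm:PQT}(4), since $\theta_t$ is generic, $\mathcal{M}(Q,\theta_t)$ is the smooth projective toric variety of the polytope $P(Q,\theta_t)$, and the divisor class $\mathrm{div}(\bfa_t)$ is, under the isomorphism \eqref{eq:exactsq}, the class of a polytope that is (a translate of) $P(Q,\theta_t)$ itself — a full-dimensional lattice polytope — so its class is ample by the standard correspondence between full-dimensional lattice polytopes and ample divisors on the associated projective toric variety. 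This gives the weight $\theta=\theta_t$ with all required properties.

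The main obstacle I anticipate is the bookkeeping in the first step: pinning down exactly which of the hyperplanes $H_V$ can pass through $\delta_Q$ and confirming that tightness of $(Q,\delta_Q)$ really does force $\delta_Q$ into the closure of a top-dimensional chamber (as opposed to, say, a lower-dimensional face that is not in the closure of any top chamber — which for a wall-and-chamber arrangement cannot happen, but one must articulate why, e.g. because the arrangement is central/essential and $\delta_Q$ lies in the interior of the ample cone's preimage). A secondary, more routine subtlety is the passage from rational to integral weights while preserving both the $\epsilon$-bound and ampleness; this is handled by first fixing $\epsilon$, then choosing $t$ so small that even after clearing denominators the perturbation can be re-scaled back down, or more simply by noting the Lemma only asserts existence of \emph{a} weight, and rational weights are permitted by the remark immediately preceding it.
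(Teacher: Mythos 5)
Your proposal is essentially correct, but it reaches the ampleness conclusion by a different route than the paper. The paper handles ampleness by taking any generic perturbation $\theta'$, using the induced morphism $\pi:\mathcal{M}(Q,\theta')\to\mathcal{M}(Q,\delta_Q)$ from Hille's VGIT picture, observing that $\pi^*(-K_{\mathcal{M}(Q,\delta_Q)})$ is nef, and then perturbing that class inside the ample cone of $\mathcal{M}(Q,\theta')$ while staying in the interior of the chamber of $\theta'$. You instead argue intrinsically: for a nearby generic weight $\theta=\text{inc}(\bfa)$, the class $\text{div}(\bfa)$ is (via \eqref{eq:exactsq}) the divisor class of the translated flow polytope $P(Q,\theta)-\bfa$, which is ample on its own toric variety $\mathcal{M}(Q,\theta)$ by Theorem \ref{thm:PQT}(4) and the standard polytope--ample-divisor correspondence. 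This is cleaner in that it avoids the contraction morphism and the nef-cone perturbation entirely, and it actually proves the stronger statement that \emph{every} nearby tight generic weight works, not just a carefully chosen one.

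Two points need tightening. First, your explanation of where tightness enters is off: tightness is not what places $\delta_Q$ in the closure of a top-dimensional chamber --- that (or, more simply, the existence of generic rational weights arbitrarily close to $\delta_Q$) is automatic, since the non-generic locus is contained in the finite union of proper hyperplanes $\{\sum_{i\in V}\theta(i)=0\}$, $\emptyset\neq V\subsetneq Q_0$, inside $\text{Wt}(Q)\otimes\mathbb{R}$; your segment argument then works for any generic $\theta_0$ and all sufficiently small $t>0$ without identifying a chamber at all. Where tightness \emph{is} needed is precisely in your ampleness step: to know that every arrow defines a facet of $P(Q,\theta_t)$ (so that $Q_1\cong\Sigma_{\theta_t}(1)$, the identification \eqref{eq:exactsq} applies, and $\text{div}(\bfa_t)$ really is the polytope divisor class) and that $P(Q,\theta_t)$ is full dimensional. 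You should therefore add the (easy) observation that tightness of $(Q,\delta_Q)$ consists of finitely many strict inequalities $\sum_{i\in V}\delta_Q(i)>0$, hence persists for $\theta_t$ with $t$ small. Second, on integrality: note that if $\delta_Q$ is not generic, no \emph{integral} weight other than $\delta_Q$ lies within $\ell_1$-distance less than $1$ of it, so the lemma must be read as permitting rational weights (rescaled to integral ones without changing the quotient), exactly as the remark preceding the lemma allows; your final sentence gets this right, but the earlier ``clear denominators'' discussion should be dropped, since rescaling destroys the $\epsilon$-bound.
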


\begin{proof}
 The anti-canonical divisor in $\mathcal M(Q,\delta_Q)$ is ample by  Theorem \ref{thm:PolarConvex}. Therefore, 
if $\delta_Q$ is generic, we can select $\theta$ to be $\delta_Q$. So we suppose that $\delta_Q$ is not generic. In that case, there exists a perturbation $\theta^{'}$ which is generic. Moreover, by \cite[Sec 3.2]{Hille1998}, we have a morphism $\pi: \mathcal M(Q,\theta^{'}) \to  \mathcal M(Q,\delta_Q)$. The ample divisors in $\mathcal M(Q,\theta^{'})$ form a cone and it is well known that the pull-back of an ample divisor such as $-K_{\mathcal{M}(Q,\delta_{Q})}$ is in the boundary of the ample cone {\em i.e} the so-called nef cone. Therefore, we can find an arbitrarily small perturbation, $\text{div}(\bfa),$ of $\pi^*(-K_{\mathcal{M}(Q,\delta_Q}))$ within the ample cone of $\mathcal{M}(Q,\theta^{'})$. Moreover, $\bfa$ may be chosen small enough such that $\theta := \text{inc}(\bfa)$ is within the interior of the same {\em chamber}  \cite[Sec 3.2]{Hille1998} of $\theta^{'}$, in which case $\mathcal{M}(Q,\theta) \cong \mathcal{M}(Q,\theta^{'})$ and $\theta$ is also generic.
\end{proof}

\subsection{Neighborliness and Codimension}
We recall that $X_{\Sigma}$ is said to be simplicial if the fan $\Sigma$ is simplicial. In particular, if $X_{\Sigma}$ is smooth, then it is certainly simplicial. The following Theorem of Jow relates the neighborliness of a fan to the codimension of $Z^{\text{Cox}}(\Sigma)$:
\begin{theorem}\cite{Jow2011}\label{thm:Jow}
Let $X_{\Sigma}$ be a  projective toric variety associated to a fan $\Sigma$. Then the following are equivalent:
\begin{enumerate}
\item $Z^{\text{Cox}}(\Sigma)$ has codimension at least $k+1$ 
\item For every set of rays $A \subset \Sigma(1)$ with $|A| \leq k$ there exist a maximal cone $\sigma$  of $\Sigma$ such that  $A \subseteq \sigma(1)$.
\end{enumerate}
If $\Sigma$ is in addition simplicial, then Condition (2) above becomes: $\Sigma$ is $k$-neighborly.
\end{theorem}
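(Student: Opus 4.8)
The plan is to reduce the statement to elementary combinatorics of the fan $\Sigma$, exploiting that $Z^{\text{Cox}}(\Sigma)$ is, by construction, a union of coordinate subspaces of $\mathbb{C}^{\Sigma(1)}$. For $S\subseteq\Sigma(1)$ I would write $V(S):=\{x\in\mathbb{C}^{\Sigma(1)} : x_\rho=0 \text{ for all }\rho\in S\}$, a coordinate subspace of codimension $|S|$. The first step is to pin down exactly which $V(S)$ lie inside $Z^{\text{Cox}}(\Sigma)$: since $Z^{\text{Cox}}(\Sigma)$ is the common zero locus of the monomials $\prod_{\rho\notin\sigma}x_\rho$ over maximal cones $\sigma$, a point $x$ lies in $Z^{\text{Cox}}(\Sigma)$ precisely when, for every maximal $\sigma$, its zero-set $\{\rho:x_\rho=0\}$ meets $\Sigma(1)\setminus\sigma(1)$, i.e.\ is not contained in $\sigma(1)$. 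Specializing first to points with zero-set exactly $S$ and then to those whose zero-set contains $S$ should give that $V(S)\subseteq Z^{\text{Cox}}(\Sigma)$ if and only if $S\not\subseteq\sigma(1)$ for every maximal cone $\sigma$, and hence
$$Z^{\text{Cox}}(\Sigma)=\bigcup\bigl\{\,V(S) : S\subseteq\Sigma(1),\ S\not\subseteq\sigma(1)\text{ for every maximal }\sigma\,\bigr\}.$$

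Next I would use that the codimension of a finite union of linear subspaces is the minimum of the individual codimensions, giving
$$\text{codim}(Z^{\text{Cox}}(\Sigma))=\min\bigl\{\,|S| : S\subseteq\Sigma(1),\ S\not\subseteq\sigma(1)\text{ for every maximal }\sigma\,\bigr\},$$
where only the inclusion-minimal such $S$ (the primitive collections) actually contribute, though I will not need that refinement. From here the equivalence of (1) and (2) is just a contrapositive: $\text{codim}(Z^{\text{Cox}}(\Sigma))\geq k+1$ says that no subset of size $\leq k$ can fail to sit inside some $\sigma(1)$, i.e.\ every $A\subseteq\Sigma(1)$ with $|A|\leq k$ is contained in $\sigma(1)$ for some maximal cone $\sigma$, which is exactly Condition (2).

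For the last clause I would invoke simpliciality twice. If $\Sigma$ is simplicial and $A\subseteq\sigma(1)$ for a maximal cone $\sigma$, then $\sigma$ is simplicial, so $\text{cone}(A)$ is a face of $\sigma$ and therefore lies in $\Sigma$; thus (2) forces every set of at most $k$ rays to span a cone of $\Sigma$, i.e.\ $\Sigma$ is $k$-neighborly. For the converse I would first record the general fact that in any fan a set of rays $A$ spanning a cone $\tau\in\Sigma$ must equal $\tau(1)$: each $\rho\in A$ lies in $\tau$ and $\text{cone}(\rho)=\tau\cap\text{cone}(\rho)$ is a face of $\tau$, necessarily a ray of $\tau$, while every extremal ray of $\text{cone}(A)$ is one of its generators. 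Hence if $\Sigma$ is $k$-neighborly and $|A|\leq k$, then $A=\tau(1)\subseteq\sigma(1)$ for any maximal cone $\sigma$ having $\tau$ as a face, which is (2).

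I expect the only real content to be the first step---reading off the coordinate-subspace components of $Z^{\text{Cox}}(\Sigma)$ directly from its monomial equations, after which the codimension becomes a transparent combinatorial minimum over subsets of rays and everything else is bookkeeping. The projectivity of $X_\Sigma$ plays no essential role beyond ensuring that $Z^{\text{Cox}}(\Sigma)$ and the collection of maximal cones are as in the definitions; the only place demanding a little care is reconciling the two readings of ``$k$-neighborly'' (every set of exactly $k$ rays spans a cone versus every set of at most $k$ rays does), which is precisely where simpliciality enters in the final step.
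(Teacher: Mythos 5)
Your argument is correct, and it is a genuinely different route from the paper's, in the sense that the paper offers no proof at all: it simply cites Proposition 10 and Remark 11 of \cite{Jow2011}. What you have written is a self-contained elementary proof of the cited fact. Your key step---that $Z^{\text{Cox}}(\Sigma)$ is exactly the union of the coordinate subspaces $V(S)$ over those $S\subseteq\Sigma(1)$ not contained in $\sigma(1)$ for any maximal cone $\sigma$, so that its codimension is the minimal size of such an $S$ (the minimal ones being the primitive collections)---is precisely the combinatorial content behind Jow's statement, so you are reproving the reference rather than finding a new mechanism; the benefit is that the theorem becomes transparent bookkeeping rather than a black box, at the cost of a page of argument the paper avoids. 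Two small points worth making explicit if you write this up: (i) your claim that a ray $\rho\in\Sigma(1)$ contained in a cone $\tau\in\Sigma$ satisfies $\rho\in\tau(1)$ uses the standard fan axiom that the intersection of two cones of a fan is a \emph{face} of each (the paper's definition only records that the intersection lies in the fan, so say you are using the usual strengthening); and (ii) in the converse direction of the simplicial clause you need every cone to be a face of a cone of maximal dimension, and this is where projectivity (completeness of $\Sigma$) quietly enters, since the paper's definition of $Z^{\text{Cox}}$ and of condition (2) is phrased in terms of cones of maximal dimension rather than inclusion-maximal cones; for a complete fan the two notions agree, so your remark that projectivity plays only a definitional role is accurate but deserves this one-line justification. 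With those clarifications the proof is complete, including the careful reconciliation of ``exactly $k$'' versus ``at most $k$'' rays via simpliciality.
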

\begin{proof}
See Proposition 10 and Remark 11 in \cite{Jow2011}.
\end{proof}

\subsection{Proof of Theorem \ref{thm:main}} \label{sec:mainProof}
Let $Q$ be an acyclic quiver with underlying graph $\Gamma$, and let $\theta$ be the generic and ample weight whose existence is guaranteed by Lemma \ref{lemma:WeightPertubation}. We shall verify that:
\begin{equation}
\text{codim}\left(Z(\chi_{\theta} \right) \geq \lfloor r/2 \rfloor
\label{eq:inq}
\end{equation}
where $r$ is the edge-connectivity of $\Gamma$. Because $r \geq 3$, $\text{codim}\left(Z(\chi_{\theta})\right) > 1$ and 
thus it will follow from Proposition \ref{prop:VerifyTight} that $\theta$ is in addition tight. Because $\theta$ is generic, 
tight and ample, from Theorem \ref{thm:Equality_of_Cox_and_King} we will have that 
$Z(\chi_{\theta}) = Z^{\text{Cox}}(\Sigma_{\theta})$ 
hence 
$\text{codim}\left( Z^{\text{Cox}}(\Sigma_{\theta})\right) \geq \lfloor r/2  \rfloor$. 
By Theorem \ref{thm:PQT} $\mathcal{M}(Q,\theta)$ is smooth and thus $\Sigma_{\theta}$ is simplicial. 
Applying Theorem \ref{thm:Jow} we obtain that the fan $\Sigma_{\theta}$ is $(\lfloor r/2 \rfloor-1)$-neighborly, and 
finally, appealing to Lemma \ref{lemma:Neighborly_Fans_and_Polytopes}, we obtain that $P^{\Delta}(Q,\theta)$ is $
(\lfloor r/2\rfloor -1)$-neighborly. Note that $P^{\Delta}(Q,\theta)$ is $|E| - |V| + 1$ dimensional by (2) of Theorem \ref{thm:PQT}. $P^{\Delta}(Q,\theta)$ has $|E|$ vertices because $P(Q,\theta)$ has $|E|$ facets, by (3) of Theorem \ref{thm:PQT}.\\

We proceed to prove \eqref{eq:inq}. For any $a \in Q_1$, let $x_{a}$ denote its associated coordinate on $\text{Rep}(Q) \cong \mathbb{C}^{Q_1}$, so that $x_{a}(R) = R(a)$. For any $B\subset Q_1$, define $Z(B)\subset \mathbb{C}^{Q_1}$ to be the coordinate hyperplane defined by the vanishing of $x_a$ for all $a\in B$.  Inequality \eqref{eq:inq} follows from 
\begin{align}
Z(\chi_{\theta}) \subset \bigcup_{B \subset Q_1 \atop |B| \geq r/2} Z(B)
\label{equation:ContainedCodim}
\end{align}
because clearly $\text{codim}(Z(B)) = |B|$. To show that \eqref{equation:ContainedCodim} holds, we show that every $R$ parametrized by $Z(\chi_{\theta})$ is contained in some $Z(B)$ with $|B| \geq \lfloor r/2\rfloor$. Indeed, consider any 
such $R$, and let $Q^{'} = \text{supp}(R)$. Let $B = Q_1\setminus Q_1^{'}$. Then $R \in Z(B)$ as if $a\in B$ then $a \notin \text{supp}(R)$, {\em i.e.} $x_{a}(R) = R(a) = 0$ as required. \\ 

Finally, we bound the size of $|B|$. Because $Q^{'}$ is unstable, there exists a $Q^{'}$-successor closed vertex set $V\subset Q_0$ with $\sum_{i\in V}\theta_{Q}(i) < 0$. Partition $Q_1$ into four sets as follows:
\begin{itemize}
\item The set $A_{1} = \{a\in Q_{1}: \ a^{+},a^{-}\in V \} $ of arrows starting and ending in $V$. 
\item 
The set $A_{2} = \{a\in Q_{1}: \ a^{+},a^{-}\in V^{c} \}$ of arrows starting and ending in $V^c$.
\item
The set $A_3 = \{a\in Q_{1}: \ a^{+} \in V,\ a^{-}\in V^{c} \}$ of arrows starting in $V^c$ and ending in $V$.
\item 
The set $A_{4} = \{a\in Q_{1}: \ a^{-} \in V,\ a^{+}\in V^{c} \}$  of arrows starting in $V$ and ending in $V^c$. 
\end{itemize}
By definition of $\theta$, for all $i\in Q_0$ we may write $\theta(i) = \delta_{Q}(i) + \epsilon(i)$ with $\sum_{i\in Q_0} |\epsilon(i)| < 1$. It follows that:
\begin{align*}
0 &> \sum_{i\in V} \theta(i) = \sum_{i\in V} \left( \delta_{Q}(i) + \epsilon(i)\right) = \sum_{i\in V}\left(\left( \sum_{a\in Q_{1} \atop a^{+} = i} 1 - \sum_{a\in Q_{1} \atop a^{-} = i} 1\right) + \epsilon(i) \right) \quad \text{(by definition of $\delta_Q$)}\\
& = \sum_{a\in Q_{1}}\left( \sum_{v\in V \atop v=a^{+}}1  - \sum_{v\in V \atop v = a^{-}}1\right) + \sum_{v\in V} \epsilon_{v} \quad \text{(switching order of summation)}
\end{align*}
Observe that the sums inside the parentheses can be simplified:
\begin{align*} 
\sum_{v\in V \atop v=a^{+}}1 = \left\{\begin{array}{cc} 1 & \text{ if } a^{+}\in V \\ 0 & \text{ otherwise} \end{array}\right. = \mathbf{1}_{V}(a^{+}) 
& & \text{ and similarly } & & \sum_{v\in V \atop v=a^{+}}1 = \mathbf{1}_{V}(a^{-}) 
\end{align*}
where $\mathbf{1}_{V}(\cdot)$ denotes the indicator function of the set $V\subset Q_0$. Moreover, $\sum_{v\in V} \epsilon_{v} > -1$. It follows that:
\begin{align*}
1 & > \sum_{a\in Q_1}\left(\bfone_{V}(a^{+}) - \bfone_{V}(a^{-})\right)\\
	& = \sum_{a\in A_{1}}\left(\bfone_{V}(a^{+}) - \bfone_{V}(a^{-})\right) + \sum_{a\in A_{2}}\left(\bfone_{V}(a^{+}) - \bfone_{V}(a^{-})\right) + \sum_{a\in A_{3}}\left(\bfone_{V}(a^{+}) - \bfone_{V}(a^{-})\right) \\
	& + \sum_{a\in A_{4}}\left(\bfone_{V}(a^{+}) - \bfone_{V}(a^{-})\right)
\end{align*}
We bound each of these four sums individually. For example, by definition if $a\in A_{1}$ then $a^{+},a^{-}\in V$. Hence $\displaystyle  \sum_{a\in A_{1}}\left(\bfone_{V}(a^{+}) - \bfone_{V}(a^{-})\right) = 0$. Appealing to the definitions of $A_{2},A_{3}$ and $A_{4}$ we similarly get that $\displaystyle \sum_{a\in A_{2}}\left(\bfone_{V}(a^{+}) - \bfone_{V}(a^{-})\right) = 0$, $\displaystyle \sum_{a\in A_{3}}\left(\bfone_{V}(a^{+}) - \bfone_{V}(a^{-})\right) = |A_3|$ and $\displaystyle \sum_{a\in A_{4}}\left(\bfone_{V}(a^{+}) - \bfone_{V}(a^{-})\right) = -|A_4|$. Hence:
\begin{equation}
1 > |A_3| - |A_4| \quad \Rightarrow |A_4| > (|A_3| + |A_4|)/2 - 1/2
\label{eq:Bound_on_A4}
\end{equation}
Now removing $A_3$ and $A_4$ disconnects $V$ and $V^c$ in the underlying graph $\Gamma$. By assumption $\Gamma$ is $r$-edge-connected, so $|A_3| + |A_4| \geq r$. From \eqref{eq:Bound_on_A4} $|A_4| > r/2 - 1/2$ and so 
$|A_4| \geq  \lfloor r/2 \rfloor$. \\

Recall that $B = Q_1\setminus Q_1^{'}$ where $Q^{'}_1$ is the arrow set of $\text{supp}(R)$. Because $V$ is $Q_1^{'}$-successor closed, no arrow leaving $V$ can be in $Q^{'}_1$. But by definition $A_{4}$ is the set of arrows in $Q_1$ leaving $V$, hence $A_4\subset B$. Thus $|B|\geq |A_4| \geq \lfloor r/2\rfloor$.

\subsection{The complete bipartite quiver}
\label{sec:ProofBipartite}
If one is careful in picking the orientation of arrows in $Q$, one can guarantee that $\delta_Q$ is generic, and hence use the Theorem \ref{thm:main} and Theorem \ref{thm:PolarConvex} to compute the vertex presentations of $k$-neighborly polytopes.

\begin{lemma}
\label{lemma:BipartiteGeneric}
Let $Q_{p,q}$ denote the complete bipartite quiver with partition $Q_{0} = Q_{L}\cup Q_{R}$ where $|Q_{L}| = p$, $|Q_{R}| = q$, and arrows oriented left-to-right. Then $\delta_{Q_{p,q}}$ is generic.
\end{lemma}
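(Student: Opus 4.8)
The plan is to reduce genericity of $\delta_{Q_{p,q}}$ to a short arithmetic observation that uses the coprimality of $p$ and $q$ (recall this is the standing hypothesis of Theorem~\ref{thm:BipartitePolytopes}; it is in fact indispensable, see the end of this sketch). By definition $\delta_{Q_{p,q}}$ is generic exactly when every $\delta_{Q_{p,q}}$-semistable subquiver $Q'\subseteq Q_{p,q}$ is actually $\delta_{Q_{p,q}}$-stable, so the task is to promote each weak inequality $\sum_{i\in V}\delta_{Q_{p,q}}(i)\ge 0$, ranging over the proper nonempty $Q'$-successor-closed sets $V\subseteq Q_0$, to a strict inequality. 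I will do this by showing that $\delta_{Q_{p,q}}$ never \emph{vanishes} on such a $V$.

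First I would write $\delta_{Q_{p,q}}$ down explicitly. In $Q_{p,q}$ every vertex of $Q_L$ has in-degree $0$ and out-degree $q$, and every vertex of $Q_R$ has in-degree $p$ and out-degree $0$; hence $\delta_{Q_{p,q}}(i) = -q$ for $i\in Q_L$ and $\delta_{Q_{p,q}}(j) = p$ for $j\in Q_R$. Consequently, for any $V\subseteq Q_0$, setting $a = |V\cap Q_L|$ and $b = |V\cap Q_R|$, we get $\sum_{i\in V}\delta_{Q_{p,q}}(i) = pb - qa$, where $0\le a\le p$ and $0\le b\le q$.

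The crux is the claim that $pb - qa \neq 0$ for every proper, nonempty $V\subseteq Q_0$. Indeed, $pb = qa$ implies $q\mid pb$, and since $\gcd(p,q) = 1$ this forces $q\mid b$; as $0\le b\le q$ we conclude $b\in\{0,q\}$, and then $a\in\{0,p\}$ with $a/p = b/q$. The only solutions are $(a,b) = (0,0)$ and $(a,b) = (p,q)$, i.e.\ $V = \emptyset$ and $V = Q_0$, which are excluded from the (semi)stability conditions. To finish, given a $\delta_{Q_{p,q}}$-semistable subquiver $Q'$ and any proper nonempty $Q'$-successor-closed $V\subseteq Q_0$, semistability gives $\sum_{i\in V}\delta_{Q_{p,q}}(i)\ge 0$, and the claim rules out equality, so the sum is strictly positive; hence $Q'$ is $\delta_{Q_{p,q}}$-stable. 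Since $Q'$ was an arbitrary semistable subquiver, $\delta_{Q_{p,q}}$ is generic.

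I do not expect a genuine obstacle here: the only care required is correctly counting the in- and out-degrees (hence the signs) in $\delta_{Q_{p,q}}$, and remembering the convention that the trivial subsets $\emptyset$ and $Q_0$ play no role in the (semi)stability tests. What the argument really hinges on is coprimality, and this cannot be dropped: for $p = q = 2$, take the perfect-matching subquiver $Q'$ of $Q_{2,2}$; then the $Q'$-successor-closed set consisting of one left vertex and the right vertex matched to it has $\delta_{Q_{2,2}}$-sum $-2 + 2 = 0$, so $Q'$ is $\delta_{Q_{2,2}}$-semistable but not stable, and $\delta_{Q_{2,2}}$ fails to be generic.
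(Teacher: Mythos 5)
Your proposal is correct and follows essentially the same route as the paper: compute $\delta_{Q_{p,q}}$ explicitly as $-q$ on $Q_L$ and $p$ on $Q_R$, express the weight of a successor-closed set $V$ as $pb-qa$, and use coprimality of $p$ and $q$ to rule out the value $0$ except for the trivial sets $\emptyset$ and $Q_0$. Your added remarks (the explicit $(a,b)=(0,0)$ case and the $p=q=2$ counterexample showing coprimality is needed) are correct but only refine the same argument.
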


\begin{proof}
Suppose $\delta_{Q_{p,q}}$ is not generic. Then there exists a representation $R$ which is semi-stable but not stable. Letting $Q^{'} = \text{supp}(R)$, this means that there exists a $Q^{'}$-successor closed proper subset $V \subset Q_0$ with $\displaystyle\sum_{i\in V} \delta_{Q}(i) = 0$.  Define $n_{L} =|V\cap Q_{L}|$ and $n_{R} = |V \cap Q_{R}|$. Because every arrow is oriented left to right:
\begin{equation*}
\delta_{Q}(i) = -q \text{ for } i\in Q_{L} \text{ and } \delta_{Q}(i) = p \text{ for } i \in Q_{R}
\end{equation*}

Hence:
\begin{equation}
0 = \sum_{i\in V} \delta_{Q}(i) = -qn_{L} + pn_{R}
\label{eq:sumCoprime}
\end{equation}

As $p$ and $q$ are co-prime, equation \eqref{eq:sumCoprime} can only hold when $n_{L} = p$ and $n_{R} = q$, contradicting the assumption that $V$ is a proper subset of  $Q_{0}$. 
\end{proof}

Since the edge connectivity of $K_{p,q}$ is $\min\{p,q\}$, one can immediately apply Theorem \ref{thm:main} to get that $P^{\Delta}(Q_{p,q},\delta_{Q_{p,q}})$ is at least $\left(\min\{p,q\}/2 - 1\right)$-neighborly. However, because we know the orientation, we can say more:
\begin{lemma}
\label{lemma:BipartiteNeighborly}
The polytope $P^{\Delta}(Q_{p,q},\delta_{Q_{p,q}})$ is $(\min\{p,q\}-1)$-neighborly.
\end{lemma}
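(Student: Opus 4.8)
The plan is to show that for every subset $A\subset Q_1$ with $|A|\le \min\{p,q\}-1$, the subquiver $Q'$ with $Q'_1 = Q_1\setminus A$ is $\delta_{Q_{p,q}}$-stable; by Theorem \ref{thm:QuiverCone} this exhibits $\mathrm{cone}(\{\rho_i : a_i\in A\})$ as a face of $\Sigma_{\delta_{Q_{p,q}}}$, which by Lemma \ref{thm:Neighborly} (via the face-fan/normal-fan correspondence) and Theorem \ref{thm:PolarConvex} translates into the statement that any $\min\{p,q\}-1$ of the $pq$ vertices of $P^{\Delta}(Q_{p,q},\delta_{Q_{p,q}})$ span a face. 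Equivalently, working directly on the dual side: by Theorem \ref{thm:PolarConvex} the vertices of $P^{\Delta}(Q_{p,q},\delta_{Q_{p,q}})$ are the columns $[-f_{b_1}(a_i),\ldots,-f_{b_d}(a_i)]^\top$, and these are in bijection with $Q_1$; a set of $\ell$ of them spans a face iff the complementary $\ell$ arrows may be deleted leaving a $\delta_Q$-stable quiver. So the whole content is the combinatorial stability claim.

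First I would fix a set $A$ of at most $k:=\min\{p,q\}-1$ arrows and let $Q' = Q\setminus A$. Suppose toward a contradiction that $Q'$ is not $\delta_Q$-stable: then there is a $Q'$-successor-closed proper subset $V\subsetneq Q_0$ (we may take $V\neq\emptyset$ since $\sum_{i\in Q_0}\delta_Q(i)=0$) with $\sum_{i\in V}\delta_Q(i)\le 0$. Write $n_L = |V\cap Q_L|$ and $n_R = |V\cap Q_R|$. Using $\delta_Q(i) = -q$ on $Q_L$ and $\delta_Q(i)=p$ on $Q_R$ exactly as in Lemma \ref{lemma:BipartiteGeneric}, the inequality becomes $pn_R - qn_L \le 0$, i.e. $pn_R \le qn_L$. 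The successor-closed condition says: every arrow of $Q'$ with tail in $V$ has head in $V$; since all arrows go left-to-right, the only arrows that could leave $V$ are those from $V\cap Q_L$ to $Q_R\setminus V$, and in the complete bipartite quiver $Q$ there are exactly $n_L(q-n_R)$ such arrows. For $V$ to be $Q'$-successor-closed, all $n_L(q-n_R)$ of these must lie in $A$, so $n_L(q-n_R)\le |A| \le k = \min\{p,q\}-1$.

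Now I would extract the contradiction from the two inequalities $pn_R\le qn_L$ and $n_L(q-n_R)\le \min\{p,q\}-1$ together with $0\le n_L\le p$, $0\le n_R\le q$, $(n_L,n_R)\ne(0,0)$, $(n_L,n_R)\ne(p,q)$. If $n_L = 0$ then $pn_R\le 0$ forces $n_R=0$, excluded. If $n_L\ge 1$ and $n_R<q$ then $n_L(q-n_R)\ge 1$; I claim in fact $n_L(q-n_R)\ge\min\{p,q\}$, which contradicts $n_L(q-n_R)\le\min\{p,q\}-1$. To see this, note that since $n_L\ge 1$ we have $n_L(q-n_R)\ge q-n_R$, and separately $pn_R\le qn_L$ combined with $n_L\le p$ gives $n_R\le q n_L/p$; one checks (a short case analysis on whether $\min\{p,q\}=p$ or $=q$, using coprimality only where the genericity of $\delta_Q$ from Lemma \ref{lemma:BipartiteGeneric} is already invoked to rule out the boundary equality) that $q-n_R$ or a suitable multiple is at least $\min\{p,q\}$ unless $(n_L,n_R)$ is one of the excluded extreme points. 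The remaining case $n_R=q$: then successor-closedness forces, considering arrows into $V\cap Q_R = Q_R$ — actually here it is cleaner to pass to the complement $V^c$, which is predecessor-closed, or simply observe $pn_R\le qn_L$ becomes $pq\le qn_L$ so $n_L\ge p$, hence $n_L=p$, giving $V=Q_0$, excluded. The main obstacle is packaging this arithmetic cleanly: the bound $k=\min\{p,q\}-1$ is tight, so the case analysis must be done carefully to see that $n_L(q-n_R)$, whenever it is positive and $V$ is proper, is forced up to at least $\min\{p,q\}$; I expect this to reduce to showing that for $1\le n_L\le p$, $0\le n_R\le q-1$ with $pn_R\le qn_L$ one always has $n_L(q-n_R)\ge\min\{p,q\}$, which should follow from elementary manipulation once one notes $n_L(q-n_R)\cdot p \ge p n_L q - n_L\cdot p n_R \ge pn_Lq - n_L\cdot qn_L = qn_L(p-n_L)\ge 0$ and sharpening this.

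Finally, having shown $Q'=Q\setminus A$ is $\delta_Q$-stable for every $A$ with $|A|\le\min\{p,q\}-1$, I conclude via Theorem \ref{thm:QuiverCone} that every such $A$ indexes the rays of a cone of $\Sigma_{\delta_{Q_{p,q}}}$; since $\delta_{Q_{p,q}}$ is generic (Lemma \ref{lemma:BipartiteGeneric}) and tight (by Proposition \ref{prop:VerifyTight}, as the codimension bound forces tightness, or directly since $k\ge 1$), the moduli space is smooth and $\Sigma_{\delta_{Q_{p,q}}}$ simplicial, so by Theorem \ref{thm:Jow} and Lemma \ref{lemma:Neighborly_Fans_and_Polytopes} the polytope $P^{\Delta}(Q_{p,q},\delta_{Q_{p,q}})$ is $(\min\{p,q\}-1)$-neighborly. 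Alternatively and more directly, Theorem \ref{thm:PolarConvex} already identifies the vertices with $Q_1$, and the stability statement for complements of $\le(\min\{p,q\}-1)$-subsets is exactly the definition of $(\min\{p,q\}-1)$-neighborliness of that vertex set, so one can skip the fan language entirely.
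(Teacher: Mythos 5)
Your proof is correct, and its combinatorial core is the same as the paper's: for a successor-closed set $V$ with $n_L=|V\cap Q_L|$, $n_R=|V\cap Q_R|$, the arrows of the complete bipartite quiver leaving $V$ number exactly $n_L(q-n_R)$, all of which must be among the deleted arrows, and this quantity is forced up to at least $\min\{p,q\}$. Where you genuinely differ is the packaging and the final arithmetic. The paper stays inside the framework of Theorem \ref{thm:main}: it bounds $|A_4|$ to get $\mathrm{codim}\,Z(\chi_{\delta_Q})\ge q$ (taking $q<p$ without loss of generality) and then passes through simpliciality, Theorem \ref{thm:Jow}, and Lemma \ref{lemma:Neighborly_Fans_and_Polytopes}. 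You instead turn the statement that $Q\setminus A$ is $\delta_Q$-stable for every $|A|\le\min\{p,q\}-1$ directly into $(\min\{p,q\}-1)$-neighborliness of $\Sigma_{\delta_Q}$ via Theorem \ref{thm:QuiverCone} (the $|A|=1$ case giving tightness, hence $\Sigma(1)\cong Q_1$), which bypasses Jow's theorem and the unstable locus entirely; this is a cleaner, more elementary route, at the price of leaning on Hille's description of the whole fan rather than only a codimension estimate. On the arithmetic, the paper treats $n_L\in\{0,1,p-1,p\}$ separately and uses concavity of $f(n_L)=-\tfrac{q}{p}n_L^2+qn_L$ on $[2,p-2]$; your inequality $p\,n_L(q-n_R)\ge q\,n_L(p-n_L)$ does the same work more uniformly, and the ``sharpening'' you leave open is genuinely one line: with $q<p$, for $1\le n_L\le p-1$ one has $n_L(p-n_L)\ge p-1$, hence $n_L(q-n_R)\ge q(p-1)/p>q-1$, and integrality gives $n_L(q-n_R)\ge q$; combined with your boundary cases $n_L\in\{0,p\}$ and $n_R=q$ this closes the contradiction, and it even works with the non-strict inequality $pn_R\le qn_L$, so the side appeal to genericity/coprimality at that point is unnecessary (coprimality is only needed, as in Lemma \ref{lemma:BipartiteGeneric}, to know $p\ne q$). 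The only soft spot in your write-up is that this last step is phrased as an expectation rather than carried out, but it follows exactly as you indicate.
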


\begin{proof}
We assume that $q < p$ (the proof for $q > p$ is similar), and we retrace the steps in the proof of Theorem \ref{thm:main}. Let $R \in Z(Q_{p,q},\delta_{p,q})$ with $Q^{'} = \text{supp}(R)$. Let $V$ be a  $Q^{'}$-successor closed vertex set such that $\displaystyle \sum_{i\in V}\delta_{{Q}_{p,q}}(i) < 0$ and define $A_1,A_2,A_3,A_4$ as in \S \ref{sec:mainProof}. It follows by the logic of the proof of Theorem \ref{thm:main} that if $|A_4| \geq q$ then indeed $P^{\Delta}(Q_{p,q},\delta_{Q_{p,q}})$ is at least $(q-1)$-neighborly, which is what we show. \\

Define $n_{L} =|V\cap Q_{L}|$ and $n_{R} = |V \cap Q_{R}|$. We shall show that for every possible value of $n_{L}$, $|A_4| \geq q$. By a short calculation one can verify that 
\begin{equation}
|A_4| = n_{L}(q - n_R)
\label{eq:ComputeA4}
\end{equation}
By assumption on $V$:
\begin{equation}
\sum_{i\in V}\delta_{Q}(i)  = -qn_{L} + pn_{R} < 0  \Rightarrow n_{R} < \frac{q}{p}n_{L}
\label{eq:NLNR}
\end{equation}
Combining \eqref{eq:ComputeA4} and \eqref{eq:NLNR} we get:
\begin{equation}
|A_{4}| > n_{L}\left(q - \frac{q}{p}n_{L}\right) = -\frac{q}{p}n_{L}^{2} + qn_{L} =: f(n_{L})
\label{eq:A4Combined}
\end{equation}
\emph{A priori} $0 \leq n_{L} \leq p$. If $n_{L} = p$ then, because $V$ is assumed to be a proper subset of $Q$, $n_{R} \leq q-1$. It follows from \eqref{eq:ComputeA4} that $|A_4| \geq p > q$. If $n_{L} = 1$ then from \eqref{eq:NLNR} because $\frac{q}{p} < 1$ we have $n_{R} = 0$. Then $|A_{4}| = 1(q - 0) = q$. An analogous argument shows that if $n_{L} = p-1$ then $|A_{4}| \geq q$. Moreover if $n_{\ell} = 0$, equation \eqref{eq:NLNR} implies that $n_{r} < 0$, which is not possible. We finish the proof by showing that for $2\leq n_{L} \leq p-2$ we have $f(n_{L}) \geq q$ and appealing to \eqref{eq:A4Combined}.\\

By elementary calculus, one can verify that on the interval $[2,p-2]$, the function $f(n_{\ell})$ is concave down, and hence achieves its minimum at the endpoints. That is, $f(n_{\ell}) \geq f(2) = f(p-2) = q(2 - \frac{4}{p})$ for all $2\leq n_{\ell} \leq p-2$.  By assumption $p \geq 4$, and so $q(2 - \frac{4}{p})\geq q$.
\end{proof}

Lemmas \ref{lemma:BipartiteNeighborly} and \ref{lemma:BipartiteGeneric} together give Theorem \ref{thm:BipartitePolytopes}. Note that from (2) of Theorem \ref{thm:PQT} the dimension of $P^{\Delta}(Q_{p,q},\delta_{Q_{p,q}})$ is $pq - (p+q) + 1$ and by (3) of Theorem \ref{thm:PQT} $P(Q_{p,q},\delta_{Q_{p,q}})$ has $pq$ facets, hence $P^{\Delta}(Q_{p,q},\delta_{Q_{p,q}})$ has $pq$ vertices.

\section{Conclusions and Further Questions}
Certainly there are other explicit families, as in Theorem 1.2, of quivers for which it can be shown that $\delta_{Q}$ is generic. The reader is invited to check, for example, that a similar construction starting with a {\em tripartite} graph with $V = V_1 \cup V_2 \cup V_3$ and $|V_i| = p_i$ with the $p_i$ pairwise coprime will also yield a quiver with $\delta_Q$ generic. Could a well-chosen family of quivers provide polytopes with even better neighborliness properties?
A second intriguing question would be to extend our approach to centrally-symmetric (cs) polytopes. Recall that a polytope $P$ is cs if for all $\bfx\in P$, $-\bfx$ is also in $P$. A cs polytope is cs-$k$-neighborly if any set of $k$ vertices, no two of which are antipodal, spans a face. Comparatively little is known about such polytopes \cite{Barvinok2013}. We leave these questions to future research.

\bibliographystyle{siamplain}
\bibliography{Bibliography}

\begin{thebibliography}{10}

\bibitem{Altmann1999}
{\sc K.~Altmann and L.~Hille}, {\em Strong exceptional sequences provided by
  quivers}, Algebras and Representation Theory, 2 (1999), pp.~1--17.

\bibitem{Altmann2009flow}
{\sc K.~Altmann, B.~Nill, S.~Schwentner, and I.~Wiercinska}, {\em Flow
  polytopes and the graph of reflexive polytopes}, Discrete Mathematics, 309
  (2009), pp.~4992--4999.

\bibitem{Altmann2009}
{\sc K.~Altmann and D.~Van~Straten}, {\em Smoothing of quiver varieties},
  manuscripta mathematica, 129 (2009), pp.~211--230.

\bibitem{Altshuler1977}
{\sc A.~Altshuler}, {\em Neighborly 4-polytopes and neighborly combinatorial
  3-manifolds with ten vertices}, Can. J. Math, 29 (1977), p.~420.

\bibitem{Altshuler1973}
{\sc A.~Altshuler and L.~Steinberg}, {\em Neighborly 4-polytopes with 9
  vertices}, Journal of Combinatorial Theory, Series A, 15 (1973),
  pp.~270--287.

\bibitem{Assarf2014}
{\sc B.~Assarf, M.~Joswig, and A.~Paffenholz}, {\em Smooth fano polytopes with
  many vertices}, Discrete \& Computational Geometry, 52 (2014), pp.~153--194.

\bibitem{Barvinok2013}
{\sc A.~Barvinok, S.~J. Lee, and I.~Novik}, {\em Explicit constructions of
  centrally symmetric $k$-neighborly polytopes and large strictly antipodal
  sets}, Discrete \& Computational Geometry, 49 (2013), pp.~429--443.

\bibitem{Bokowski1987}
{\sc J.~Bokowski and I.~Shemer}, {\em Neighborly 6-polytopes with 10 vertices},
  Israel Journal of Mathematics, 58 (1987), pp.~103--124.

\bibitem{Cox1996}
{\sc D.~A. Cox}, {\em Recent developments in toric geometry}, arXiv preprint
  alg-geom/9606016,  (1996).

\bibitem{Cox2011}
{\sc D.~A. Cox, J.~B. Little, and H.~K. Schenck}, {\em Toric varieties},
  American Mathematical Soc., 2011.

\bibitem{Craw2008}
{\sc A.~Craw}, {\em Quiver representations in toric geometry}, arXiv preprint
  arXiv:0807.2191,  (2008).

\bibitem{Craw2008b}
{\sc A.~Craw and G.~G. Smith}, {\em Projective toric varieties as fine moduli
  spaces of quiver representations}, American journal of mathematics, 130
  (2008), pp.~1509--1534.

\bibitem{Devore2007}
{\sc R.~A. DeVore}, {\em Deterministic constructions of compressed sensing
  matrices}, Journal of complexity, 23 (2007), pp.~918--925.

\bibitem{Donoho2005N}
{\sc D.~L. Donoho and J.~Tanner}, {\em Neighborliness of randomly projected
  simplices in high dimensions}, Proceedings of the National Academy of
  Sciences of the United States of America, 102 (2005), pp.~9452--9457.

\bibitem{Donoho2005}
{\sc D.~L. Donoho and J.~Tanner}, {\em Sparse nonnegative solution of
  underdetermined linear equations by linear programming}, Proceedings of the
  National Academy of Sciences of the United States of America, 102 (2005),
  pp.~9446--9451.

\bibitem{Finbow2015}
{\sc W.~Finbow}, {\em Simplicial neighbourly 5-polytopes with nine vertices},
  Bolet{\'\i}n de la Sociedad Matem{\'a}tica Mexicana, 21 (2015), pp.~39--51.

\bibitem{Fukshansky2018}
{\sc L.~Fukshansky, D.~Needell, and B.~Sudakov}, {\em An algebraic perspective
  on integer sparse recovery}, arXiv preprint arXiv:1801.01526,  (2018).

\bibitem{Fukuda2013}
{\sc K.~Fukuda, H.~Miyata, and S.~Moriyama}, {\em Complete enumeration of small
  realizable oriented matroids}, Discrete \& Computational Geometry, 49 (2013),
  pp.~359--381.

\bibitem{Grunbaum1967}
{\sc B.~Gr{\"u}nbaum and V.~P. Sreedharan}, {\em An enumeration of simplicial
  4-polytopes with 8 vertices}, Journal of Combinatorial Theory, 2 (1967),
  pp.~437--465.

\bibitem{Hille1998}
{\sc L.~Hille}, {\em Toric quiver varieties}, Algebras and modules, II
  (Geiranger, 1996), 24 (1998), pp.~311--325.

\bibitem{Joo2015}
{\sc D.~Jo{\'o}}, {\em Toric quiver varieties}, PhD thesis, Central European
  University, 2015.

\bibitem{Jow2011}
{\sc S.-Y. Jow}, {\em A lefschetz hyperplane theorem for mori dream spaces},
  Mathematische Zeitschrift, 268 (2011), pp.~197--209.

\bibitem{King1994}
{\sc A.~D. King}, {\em Moduli of representations of finite dimensional
  algebras}, The Quarterly Journal of Mathematics, 45 (1994), pp.~515--530.

\bibitem{Li2012}
{\sc S.~Li, F.~Gao, G.~Ge, and S.~Zhang}, {\em Deterministic construction of
  compressed sensing matrices via algebraic curves}, IEEE Transactions on
  Information Theory, 58 (2012), pp.~5035--5041.

\bibitem{Padrol2013}
{\sc A.~Padrol}, {\em Many neighborly polytopes and oriented matroids},
  Discrete \& Computational Geometry, 50 (2013), pp.~865--902.

\bibitem{Shemer1982}
{\sc I.~Shemer}, {\em Neighborly polytopes}, Israel Journal of Mathematics, 43
  (1982), pp.~291--311.

\bibitem{Ziegler2012}
{\sc G.~M. Ziegler}, {\em Lectures on polytopes}, vol.~152, Springer Science \&
  Business Media, 2012.

\end{thebibliography}
\end{document}